\numberwithin{equation}{section}
\newtheorem{thm}{Theorem}[section]
\newtheorem{theorem}[thm]{Theorem}
\newtheorem{lemma}[thm]{Lemma}
\newtheorem{corollary}[thm]{Corollary}
\begin{document}

\setcounter{page}{1}

\title[Mean values]{On the mean values of products of Dirichlet $L$-functions at positive integers}
\thanks{2020 Mathematics Subject Classification. 11M06; 11F20; 11B68.}\thanks{Keywords. Dirichlet $L$-functions; Dedekind sums; Bernoulli functions; Mean values}
\author{Yuan He}
\address{School of Mathematics and Information Science, Neijiang Normal University, Neijiang 641100, Sichuan, People's Republic of China}
\email{hyyhe@aliyun.com}

\begin{abstract}
In this paper, we study the mean value distributions of Dirichlet $L$-functions at positive integers. We give some explicit formulas for the mean values of products of two and three Dirichlet $L$-functions at positive integers weighted by Dirichlet characters that involve the Bernoulli functions and Jordan's totient functions. The results presented here are the generalizations of various known formulas.
\end{abstract}

\maketitle

\section{Introduction}

Let $\mathbb{N}$ be the set of positive integers, $\mathbb{N}_{0}$ the set of non-negative integers, $\mathbb{Z}$ the set of integers, $\mathbb{R}$ the set of real numbers, and $\mathbb{C}$ the set of complex numbers. For $q\in\mathbb{N}$, the Dirichlet $L$-function $L(s,\chi)$ is defined for $s\in\mathbb{C}$ by the series
\begin{equation}\label{eq1.1}
L(s,\chi)=\sum_{n=1}^{\infty}\frac{\chi(n)}{n^{s}}\quad(\Re(s)>1),
\end{equation}
where $\chi$ is a Dirichlet character modulo $q$. It is well known that the values of $L(s,\chi)$ at integers, especially when $\chi$ is a non-principal
character, have good distribution properties and important arithmetical and algebraic
information; see, for example, \cite{alkan3,berndt1,berndt2,cohen,kim}.

In the present paper, we will be concerned with the mean value distributions of products of Dirichlet $L$-functions at positive integers. Perhaps the earliest result in this
direction is Paley and Selberg's \cite{selberg} asymptotic formula, namely,
\begin{equation}\label{eq1.2}
\sum_{\substack{\text{$\chi$ (mod $p$)}\\ \chi\not=\chi_{0}}}|L(1,\chi)|^{2}=\frac{\pi^{2}}{6}p+O\bigl((\log p)^{2}\bigl),
\end{equation}
where $p$ is a prime, $\chi_{0}$ is the principal
character. In 1951, Ankeny and Chowla \cite{ankeny} provided a direct simple proof of \eqref{eq1.2}. In 1986, Slavutskii \cite{slavutskii} improved the error estimate in \eqref{eq1.2} to $O(\log p)$. Further refinements were given by Zhang \cite{zhang3,zhang4} in 1990, where the error estimate in \eqref{eq1.2} is improved to $O(\log\log p)$ and $O(\frac{1}{\log p})$, respectively. See also \cite{katsurada,lee1,lee2,zhang2} for further exploration.

On the other hand, let $q,m_{1},m_{2},m_{3}\in\mathbb{N}$, $a,b,c\in\mathbb{Z}$, and set
\begin{equation}\label{eq1.3}
V_{\chi,q}(m_{1},m_{2};a,b)=\sum_{\substack{\text{$\chi$ (mod $q$)}\\ \chi(-1)=(-1)^{m_{1}}=(-1)^{m_{2}}}}\chi(a)\overline{\chi}(b)L(m_{1},\chi)L(m_{2},\overline{\chi}),
\end{equation}
and
\begin{eqnarray}\label{eq1.4}
&&V_{\chi_{1},\chi_{2},q}(m_{1},m_{2},m_{3};a,b,c)\nonumber\\
&&=\sum_{\substack{\text{$\chi_{1},\chi_{2}$ (mod $q$)}\\ \chi_{1}(-1)=(-1)^{m_{1}}\\ \chi_{2}(-1)=(-1)^{m_{2}}}}\chi_{1}(a)\chi_{2}(b)\overline{\chi_{1}\chi_{2}}(c)L(m_{1},\chi_{1})L(m_{2},\chi_{2})L(m_{3},\overline{\chi_{1}\chi_{2}}),
\end{eqnarray}
where $\overline{\chi}$ denotes the complex conjugation of $\chi$. In 1982, Walum \cite{walum} proved that if $p$ is an odd prime then
\begin{equation}\label{eq1.5}
V_{\chi,p}(1,1;1,1)=\frac{\pi^{2}(p-1)^{2}(p-2)}{12p^{2}}.
\end{equation}
About the year 1989, Zhang \cite[Theorem 3]{zhang1} extended \eqref{eq1.5}, and demonstrated that for $q\in\mathbb{N}$ with $q\geq3$,
\begin{equation}\label{eq1.6}
V_{\chi,q}(1,1;1,1)=\frac{\pi^{2}\phi(q)^{2}}{12q^{2}}\biggl(q\prod_{\substack{p\mid q\\ \text{$p$ prime}}}\biggl(1+\frac{1}{p}\biggl)-3\biggl),
\end{equation}
where $\phi(q)$ is Euler's totient function. After that, Qi \cite{qi}, Zhang \cite{zhang5,zhang6,zhang7}, Louboutin \cite{louboutin1,louboutin2} and Alkan \cite{alkan2} provided different proofs of \eqref{eq1.6}. In particular, Zhang \cite[Theorem]{zhang6} proved that for $q,m,a\in\mathbb{N}$ with $q\geq3$ and $(a,q)=1$,
\begin{equation*}
\sum_{d\mid q}\frac{d^{2n}}{\phi(d)}V_{\chi,d}(m,m;a,1)
\end{equation*}
can be explicitly expressed by the generalized Dedekind sums, and then evaluated $V_{\chi,q}(1,1;1,1),V_{\chi,q}(2,2;1,1),V_{\chi,q}(3,3;1,1)$ in terms of Jordan's totient function $J_{n}(q)$ defined for $q,n\in\mathbb{N}$ by
\begin{equation}\label{eq1.7}
J_{n}(q)=q^{n}\prod_{\substack{p\mid q\\ \text{$p$ prime}}}\biggl(1-\frac{1}{p^{n}}\biggl)\quad\bigl(\phi(q)=J_{1}(q)\bigl).
\end{equation}
Louboutin \cite[Theorem 1]{louboutin2} discovered that for $q,m\in\mathbb{N}$ with $q\geq3$, there exists a polynomial $R_{m}(x)=\sum_{l=0}^{2m}r_{m,l}x^{l}$ of degree $2m$ with rational coefficients $r_{m,l}$ such that
\begin{equation}\label{eq1.8}
V_{\chi,q}(m,m;1,1)=\frac{\pi^{2m}\phi(q)}{4q^{2m}\bigl((m-1)!\bigl)^{2}}\sum_{l=1}^{2m}r_{m,l}J_{l}(q),
\end{equation}
where $J_{l}(q)$ is as in \eqref{eq1.7}.
In 2006, Liu and Zhang \cite[Theorem 1.1]{liu1} determined the coefficients $r_{m,l}$ appearing in \eqref{eq1.8}, and obtained the following more general formula
\begin{equation}\label{eq1.9}
V_{\chi,q}(m,n;1,1)=\frac{(-1)^{\frac{m-n}{2}}(2\pi)^{m+n}\phi(q)}{4q^{m+n}m!n!}\biggl(\sum_{l=1}^{m+n}r_{m,n,l}J_{l}(q)-\epsilon_{m,n}\frac{\phi(q)}{4}\biggl),
\end{equation}
where $q,m,n\in\mathbb{N}$ with $q\geq2$, $J_{l}(q)$ is as in \eqref{eq1.7}, the coefficients $r_{m,n,l}$ are given by
\begin{equation*}
r_{m,n,l}=B_{m+n-l}\underset{j+k\geq m+n-l}{\sum_{j=0}^{m}\sum_{k=0}^{n}}B_{m-j}B_{n-k}\frac{\binom{m}{j}\binom{n}{k}\binom{j+k+1}{m+n-l}}{j+k+1}
\end{equation*}
with $B_{n}$ being the $n$-th Bernoulli number,
\begin{equation*}
\epsilon_{m,n}=\begin{cases}
1,  &\text{if $m=n=1$},\\
0,  &\text{otherwise}.
\end{cases}
\end{equation*}
For some equivalent versions and additional results of \eqref{eq1.9}, one is referred to \cite{bayad,kanemitsu,okamoto,xu}. In 2012, Wu and Zhang \cite[Corollary 1.1]{wu} shown that for an odd prime $p$ and $k\in\mathbb{N}$, $V_{\chi,p}(1,1;2^{k},1)$ can be evaluated by a recursive sequence. Further, Louboutin \cite[Theorem 1]{louboutin3} in 2014 established a more general formula for $V_{\chi,q}(1,1;a,1)$ in the case when $q,a\in\mathbb{N}$ with $q\geq3$ and $a\geq2$, which involves the cotangent sums. Liu \cite[Theorems 1.1 and 1.2]{liu2} in 2015 gave some explicit formulas for $V_{\chi,p}(1,n;a,1)$ and $V_{\chi,p}(2,n;a,1)$ in the case when $p$ is an odd prime and $n,a\in\mathbb{N}$ with $p>3$ and $1\leq a\leq4$, which involve the Bernoulli polynomials. More recently, Louboutin and Munsch \cite[Theorem 3.6 and Corollary 3.8]{louboutin5} obtained two asymptotic formulas for $V_{\chi,p}(1,1;a,b)$ in the case when $p$ is an odd prime and $a,b\in\mathbb{N}$ with $p>3$, by virtue of which some known results stated in \cite{lee1,lee2,louboutin4} are rediscovered and some asymptotic formulas for the sums considered by Sun \cite{sun} are given. In addition, using his formula for Dirichlet $L$-function at positive integers stated in \cite[Theorem 1]{alkan1} (see also \cite[Equation (1.17)]{he1}), Alkan \cite[Theorem 1]{alkan4} in 2013 evaluated $V_{\chi_{1},\chi_{2},q}(1,1,2;1,1,1)$ in terms of Jordan's totient functions, as follows,
\begin{equation}\label{eq1.10}
V_{\chi_{1},\chi_{2},q}(1,1,2;1,1,1)=\frac{\pi^{4}\phi(q)^{2}}{360q^{4}}\bigl(J_{4}(q)-5J_{2}(q)\bigl)\quad(q\geq3).
\end{equation}
Following the work of Alkan \cite{alkan4}, Okamoto and Onozuka \cite[Theorem 1.1]{okamoto} in 2015 used Louboutin's \cite[Proposition 3]{louboutin2} result for Dirichlet $L$-function at positive integers to obtain the formula for $V_{\chi_{1},\chi_{2},q}(m,n,m+n;1,1,1)$ in the case when $q,m,n\in\mathbb{N}$ with $q\geq3$, which involves the Bernoulli numbers, Jordan's totient functions and a recursive sequence.

It is natural to ask whether there exists some more general formulas for the mean values of products of two and three Dirichlet $L$-functions at positive integers weighted by Dirichlet characters. We give positive answers for the problems posed here. We explicitly evaluate \eqref{eq1.3} and \eqref{eq1.4} in terms of the Bernoulli functions and Jordan's totient functions (see Theorems \ref{thm2.1} and \ref{thm2.4} below).

This paper is organized as follows. In the second section, we state our main results, from which various known formulas are deduced as special cases. In the third section, we give some auxiliary lemmas. The fourth section concentrates on the feature that has contributed to the detailed proofs of Theorems \ref{thm2.1} and \ref{thm2.4}.

\section{Statement of main results}

For convenience, in the following we always denote by $\mathrm{i}$ the square root of $-1$ such that $\mathrm{i}^{2}=-1$, $\mu(q)$ the M\"{o}bius function, $(a,b)$ the greatest common factor of $a,b\in\mathbb{Z}$, $J_{\alpha}(q)$ the generalization of Jordan's totient function given for $\alpha\in\mathbb{C}$, $q\in\mathbb{N}$ by
\begin{equation*}
J_{\alpha}(q)=q^{\alpha}\prod_{\substack{p\mid q\\ \text{$p$ prime}}}\biggl(1-\frac{1}{p^{\alpha}}\biggl).
\end{equation*}
We also write, for $m,n\in\mathbb{N}_{0}$, $\delta_{m,n}$ as the Kronecker delta function given by
\begin{equation*}
\delta_{m,n}=\begin{cases}
1,  &\text{if $m=n$},\\
0,  &\text{if $m\not=n$};
\end{cases}
\end{equation*}
for $n\in\mathbb{N}_{0}$, $x\in\mathbb{R}$, $\overline{B}_{n}(x)$ as the $n$-th Bernoulli function given by $\overline{B}_{0}(x)=1$,
\begin{equation}\label{eq2.1}
\overline{B}_{1}(x)=\begin{cases}
B_{1}(\{x\}),  &\text{if $x\in\mathbb{R}\setminus\mathbb{Z}$},\\
0,  &\text{if $x\in\mathbb{Z}$},
\end{cases}
\quad\overline{B}_{n}(x)=B_{n}(\{x\})\quad(n\geq2),
\end{equation}
where $B_{n}(x)$ is the Bernoulli polynomial of degree $n$, $\{x\}$ denotes the fractional part of $x\in\mathbb{R}$.
We now present the mean values of products of two Dirichlet $L$-functions at positive integers weighted by Dirichlet characters as follows.

\begin{theorem}\label{thm2.1} Let $q,m,n,a,b\in\mathbb{N}$ with $(a,q)=(b,q)=1$. Then
\begin{eqnarray}\label{eq2.2}
V_{\chi,q}(m,n;a,b)&=&\frac{(-1)^{\frac{m-n}{2}}(2\pi)^{m+n}\phi(q)}{4q^{m+n}m!n!}\nonumber\\
&&\times\bigl(R(m,n,q;a,b)+R(n,m,q;b,a)+C_{1}\bigl),
\end{eqnarray}
where
\begin{eqnarray*}
R(m,n,q;a,b)&=&nb^{n-1}\sum_{j=0}^{m}\binom{m}{j}\frac{(-1)^{j}a^{m-j}}{m+n-j}\sum_{l=1}^{b}\overline{B}_{j}\biggl(\frac{al}{b}\biggl)\nonumber\\
&&\times\sum_{d\mid q}\mu\biggl(\frac{q}{d}\biggl)d^{j}\overline{B}_{m+n-j}\biggl(\frac{dl}{b}\biggl),
\end{eqnarray*}
and
\begin{equation*}
C_{1}=\frac{(-1)^{n-1}m!n!(a,b)^{m+n}B_{m+n}J_{m+n}(q)}{a^{n}b^{m}(m+n)!}-\delta_{1,m}\delta_{1,n}\frac{\phi(q)}{4}.
\end{equation*}
\end{theorem}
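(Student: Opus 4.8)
The plan is to reduce the character sum \eqref{eq2.2} to a sum of closed-form products of the two individual $L$-values and then exploit orthogonality. First I would record, for a primitive character $\chi$ modulo $q$ with $\chi(-1)=(-1)^{m}$ and $(a,q)=1$, the closed form coming from the Hurwitz formula,
\begin{equation*}
\chi(a)L(m,\chi)=-\frac{(2\pi\mathrm{i})^{m}}{2\,m!\,\tau(\overline{\chi})}\sum_{h=1}^{q}\overline{\chi}(h)\,\overline{B}_{m}\biggl(\frac{ah}{q}\biggr),
\end{equation*}
which follows by inserting the Fourier expansion of $\overline{B}_{m}$ and using $\sum_{k\neq0}\overline{\chi}(k)k^{-m}=2L(m,\overline{\chi})$ (valid precisely when $\chi(-1)=(-1)^{m}$), then shifting $h\mapsto a h$; the analogous identity holds for $\overline{\chi}(b)L(n,\overline{\chi})$ with the Gauss sum $\tau(\chi)$. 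I would carry this out for primitive $\chi$ and then pass to general $q$ by writing each $\chi$ modulo $q$ through the primitive character $\chi^{*}$ inducing it, using $L(m,\chi)=L(m,\chi^{*})\prod_{p\mid q}(1-\chi^{*}(p)p^{-m})$. The Euler factors produced in this step are exactly what will generate the divisor sum $\sum_{d\mid q}\mu(q/d)d^{j}$ and, ultimately, the Jordan-totient factor $J_{m+n}(q)$.

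Forming the product $\chi(a)\overline{\chi}(b)L(m,\chi)L(n,\overline{\chi})$, the two Gauss sums combine through $\tau(\chi)\tau(\overline{\chi})=\chi(-1)q=(-1)^{m}q$, and since $m\equiv n\pmod2$ the factor $\mathrm{i}^{m+n}=(-1)^{(m+n)/2}$ combines with $(-1)^{-m}$ to give the sign $(-1)^{(m-n)/2}$ of \eqref{eq2.2}. This leaves a double sum $\sum_{h_{1},h_{2}}\overline{\chi}(h_{1})\chi(h_{2})\overline{B}_{m}(ah_{1}/q)\overline{B}_{n}(bh_{2}/q)$ attached to each conductor. Invoking orthogonality of characters then forces $h_{2}\equiv\pm h_{1}$ and collapses the double sum to two single sums $\sum_{h}\overline{B}_{m}(ah/q)\overline{B}_{n}(\pm bh/q)$ of generalized Dedekind type; assembling the contributions over all conductors $d\mid q$ supplies the factor $\phi(q)$ and the divisor sums. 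The two signs $\pm$ are precisely the origin of the two summands $R(m,n,q;a,b)$ and $R(n,m,q;b,a)$.

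It then remains to evaluate these generalized Dedekind sums in closed form. Here I would apply the Bernoulli addition formula $B_{m}(x+y)=\sum_{j=0}^{m}\binom{m}{j}B_{j}(x)y^{m-j}$ to expand $\overline{B}_{m}(ah/q)$, together with the distribution relation $\sum_{l=0}^{b-1}\overline{B}_{n}\bigl(x+\tfrac{l}{b}\bigr)=b^{1-n}\overline{B}_{n}(bx)$, which is what converts the common denominator $q$ into the denominator $b$ appearing in $\overline{B}_{j}(al/b)$ and $\overline{B}_{m+n-j}(dl/b)$ and introduces the inner sum $\sum_{l=1}^{b}$. Tracking the exponents $a^{m-j}$, $b^{n-1}$ and the factor $n/(m+n-j)$ through this reduction should reproduce $R(m,n,q;a,b)$, while the diagonal contribution, coming from the term in which the argument becomes integral so that $\overline{B}_{m+n}(0)=B_{m+n}$ and the full divisor sum assembles into $J_{m+n}(q)$, supplies the first piece of $C_{1}$ with its factor $(a,b)^{m+n}B_{m+n}J_{m+n}(q)$.

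The main obstacle I anticipate is twofold. First, interchanging the roles of the denominators $q$ and $b$ via the distribution relation while simultaneously carrying the primitive-character correction is bookkeeping-intensive, and the emergence of the exact power $(a,b)^{m+n}$ in $C_{1}$ requires careful handling of the common divisor of $a$ and $b$ in the residue counting. Second, the case $m=n=1$ is genuinely exceptional: the convention $\overline{B}_{1}(x)=0$ for $x\in\mathbb{Z}$ in \eqref{eq2.1} and the conditional convergence of the defining series of $L(1,\chi)$ must both be handled with care, and this is the source of the correction term $-\delta_{1,m}\delta_{1,n}\phi(q)/4$, entirely parallel to the $\epsilon_{m,n}$ term in \eqref{eq1.9}. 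I would isolate this case and verify the correction directly against the known value \eqref{eq1.6}.
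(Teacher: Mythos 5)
Your outer reduction is broadly parallel in spirit to the paper's, but the specific route you chose has a structural obstruction you do not resolve. The paper never passes through primitive characters or Gauss sums: it symmetrizes each $L$-value into a two-sided series $\sum_{l\neq0}\chi(l)l^{-m}$ (so that the parity condition is absorbed into a factor $\frac14$), applies orthogonality modulo $q$ directly to the doubly infinite sums, detects the resulting congruence $al\equiv bj$ (mod $q$) with additive characters, handles $(j,q)=1$ by M\"obius, and only then recognizes the exponential sums as the Fourier series \eqref{eq3.6} of $\overline{B}_{n}$. In your version, once $L(m,\chi)$ is expressed as a finite character sum it is a sum modulo the \emph{conductor} $f_{\chi}$, which varies with $\chi$; summing over all $\chi$ modulo $q$ you cannot ``invoke orthogonality'' on the pair $(h_{1},h_{2})$, since characters modulo $q$ with fixed conductor $f$ do not form the full character group modulo $f$. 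The Euler-factor corrections compound this bookkeeping, and it is not at all routine that they assemble into $\sum_{d\mid q}\mu(q/d)d^{j}$. Moreover your claim that the two signs $h_{2}\equiv\pm h_{1}$ produce the two summands $R(m,n,q;a,b)$ and $R(n,m,q;b,a)$ is wrong: by the parity relation $\overline{B}_{n}(-x)=(-1)^{n}\overline{B}_{n}(x)$ (equation \eqref{eq3.7}), the branch $h_{2}\equiv-h_{1}$ yields the \emph{same} single Dedekind-type sum up to a sign, whereas the two $R$-terms interchange the roles of $(m,a)$ and $(n,b)$ and arise from a different mechanism entirely.

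That mechanism is the genuine gap. After the reduction, everything hinges on evaluating the higher-order Dedekind-type sums $\sum_{j=0}^{d-1}\overline{B}_{m}(aj/d)\,\overline{B}_{n}(bj/d)$ in closed form; the paper obtains this from Lemma \ref{lem3.1}, a Hall--Wilson--Zagier-type reciprocity formula for the product $\overline{B}_{m}(ax+y)\overline{B}_{n}(bx+z)$ proved in \cite{he3}, and it is exactly this reciprocity that delivers the symmetric pair $R(m,n,q;a,b)+R(n,m,q;b,a)$, the term $(a,b)^{m+n}B_{m+n}/(a^{n}b^{m})$ in $C_{1}$, and the exceptional correction $-\delta_{1,m}\delta_{1,n}\phi(q)/4$. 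Your proposed substitute --- the addition formula $B_{m}(x+y)=\sum_{j}\binom{m}{j}B_{j}(x)y^{m-j}$ combined with Raabe's relation \eqref{eq2.4} --- cannot do this work: the addition formula is a polynomial identity and is false for the periodized functions $\overline{B}_{m}(\{x\})$ whenever the argument wraps past an integer, and tracking exponents through Raabe alone produces no reciprocity structure. Without Lemma \ref{lem3.1} or an equivalent result on generalized Dedekind--Rademacher sums, the sketch stalls precisely at the step where all the content of \eqref{eq2.2} resides; checking $m=n=1$ against \eqref{eq1.6} validates a special case but does not close this gap.
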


It follows that we show some special cases of Theorem \ref{thm2.1}. We first give the following result.

\begin{corollary}\label{cor2.2} Let $q,m,n,a\in\mathbb{N}$ with $q\geq2$ and $(a,q)=1$. Then
\begin{eqnarray}\label{eq2.3}
&&V_{\chi,q}(m,n;a,1)\nonumber\\
&&=\frac{(-1)^{\frac{m-n}{2}}(2\pi)^{m+n}\phi(q)}{4q^{m+n}m!n!}\biggl(n\sum_{j=1}^{m}\binom{m}{j}\frac{(-1)^{j}a^{m-j}}{m+n-j}\overline{B}_{j}(0)\overline{B}_{m+n-j}(0)J_{j}(q)\nonumber\\
&&\quad+ma^{m-1}\sum_{j=1}^{n}\binom{n}{j}\frac{(-1)^{j}}{m+n-j}\sum_{l=1}^{a}\overline{B}_{j}\biggl(\frac{l}{a}\biggl)\sum_{d\mid q}\mu\biggl(\frac{q}{d}\biggl)d^{j}\overline{B}_{m+n-j}\biggl(\frac{dl}{a}\biggl)\nonumber\\
&&\quad+\frac{(-1)^{n-1}m!n!B_{m+n}J_{m+n}(q)}{a^{n}(m+n)!}-\delta_{1,m}\delta_{1,n}\frac{\phi(q)}{4}\biggl).
\end{eqnarray}
\end{corollary}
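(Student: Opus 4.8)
The plan is to specialize Theorem~\ref{thm2.1} to the case $b=1$ and then simplify separately the three constituents $R(m,n,q;a,1)$, $R(n,m,q;1,a)$ and $C_1$, after which everything should assemble under the common prefactor $\frac{(-1)^{(m-n)/2}(2\pi)^{m+n}\phi(q)}{4q^{m+n}m!n!}$ into \eqref{eq2.3}.

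First I would treat $R(m,n,q;a,1)$. With $b=1$ the outer factor $b^{n-1}=1$, the sum $\sum_{l=1}^b$ collapses to the single index $l=1$, and the arguments $\tfrac{al}{b}=a$, $\tfrac{dl}{b}=d$ become integers. Since $\overline{B}_k$ is $1$-periodic, $\overline{B}_j(a)=\overline{B}_j(0)$ and $\overline{B}_{m+n-j}(d)=\overline{B}_{m+n-j}(0)$, so the divisor sum factors out as $\sum_{d\mid q}\mu(q/d)d^j$. I would then invoke the standard identity $\sum_{d\mid q}\mu(q/d)d^j=J_j(q)$ (both sides are multiplicative in $q$ and agree on prime powers via $p^{kj}-p^{(k-1)j}=p^{kj}(1-p^{-j})$), which rewrites $R(m,n,q;a,1)$ as $n\sum_{j=0}^m\binom{m}{j}\frac{(-1)^ja^{m-j}}{m+n-j}\overline{B}_j(0)\overline{B}_{m+n-j}(0)J_j(q)$. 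The $j=0$ term carries the factor $J_0(q)=\sum_{d\mid q}\mu(q/d)$, which is $0$ for $q\geq2$, so it may be dropped, giving exactly the first sum in \eqref{eq2.3} (the written $j=1$ summand being automatically $0$ since $\overline{B}_1(0)=0$, and likewise for the boundary index where $m+n-j=1$).

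Next I would unwind $R(n,m,q;1,a)$, i.e. the definition with $(m,n,a,b)$ replaced by $(n,m,1,a)$. This reproduces verbatim the second sum of \eqref{eq2.3}, except that the index starts at $j=0$, so the crux of the argument is to show the $j=0$ term vanishes. Since $\overline{B}_0\equiv1$, that term equals $\frac{ma^{m-1}}{m+n}\sum_{d\mid q}\mu(q/d)\sum_{l=1}^a\overline{B}_{m+n}(dl/a)$. Using $(a,q)=1$, hence $(a,d)=1$ for every $d\mid q$, the map $l\mapsto dl$ permutes the residues modulo $a$, so $\sum_{l=1}^a\overline{B}_{m+n}(dl/a)=\sum_{l=0}^{a-1}\overline{B}_{m+n}(l/a)$; by the multiplication (Raabe) formula for Bernoulli functions this equals $a^{1-(m+n)}B_{m+n}$, independent of $d$. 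The residual factor $\sum_{d\mid q}\mu(q/d)$ again vanishes for $q\geq2$, so the $j=0$ term is $0$ and we recover the second sum of \eqref{eq2.3}.

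Finally, $C_1$ simplifies immediately: with $b=1$ one has $(a,b)=1$ and $b^m=1$, so $C_1=\frac{(-1)^{n-1}m!n!B_{m+n}J_{m+n}(q)}{a^n(m+n)!}-\delta_{1,m}\delta_{1,n}\frac{\phi(q)}{4}$, which are precisely the last two terms of \eqref{eq2.3}. Collecting the three simplified pieces then yields the claim. The only step that is not pure substitution is the vanishing of the $j=0$ term of $R(n,m,q;1,a)$; I expect this to be the single point requiring care, and it hinges on the reindexing via $(a,q)=1$ together with the Bernoulli multiplication formula and the Möbius identity $\sum_{d\mid q}\mu(q/d)=0$ for $q\geq2$. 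Everything else reduces to the periodicity of $\overline{B}_k$ at integer arguments and the convolution identity $J_j(q)=\sum_{d\mid q}\mu(q/d)d^j$.
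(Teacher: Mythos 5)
Your proposal is correct and follows essentially the same route as the paper: specialize $b=1$ in Theorem~\ref{thm2.1}, use periodicity of $\overline{B}_k$ at integer arguments together with the identity \eqref{eq2.7} to produce the $J_j(q)$ terms, and kill the $j=0$ contributions via the residue-system reindexing \eqref{eq2.5} (Raabe's formula) combined with the M\"obius identity \eqref{eq2.6}. Your write-up merely makes explicit the vanishing of the $j=0$ term of $R(n,m,q;1,a)$, which the paper compresses into the citation of \eqref{eq2.5} and \eqref{eq2.6}.
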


\begin{proof}
The famous Raabe's \cite{raabe} multiplication formula claims that for $n,a\in\mathbb{N}$, $x\in\mathbb{R}$,
\begin{equation}\label{eq2.4}
a^{n-1}\sum_{l=0}^{a-1}\overline{B}_{n}\biggl(x+\frac{l}{a}\biggl)
=\overline{B}_{n}(ax).
\end{equation}
Since for $a\in\mathbb{N}$, $d\in\mathbb{Z}$ with $(a,d)=1$, $dl$ runs over a complete residue
system modulo $a$ as $l$ does, we obtain from \eqref{eq2.4} that for $n,a\in\mathbb{N}$, $d\in\mathbb{Z}$, $x\in\mathbb{R}$ with $(a,d)=1$,
\begin{equation}\label{eq2.5}
a^{n-1}\sum_{l=0}^{a-1}\overline{B}_{n}\biggl(x+\frac{dl}{a}\biggl)
=\overline{B}_{n}(ax).
\end{equation}
Note that for $q\in\mathbb{N}$, (see, e.g., \cite[Theorem 2.1]{apostol})
\begin{equation}\label{eq2.6}
\sum_{d\mid q}\mu(d)=
\begin{cases}
1,  &\text{if $q=1$},\\
0,  &\text{if $q\geq2$}.
\end{cases}
\end{equation}
More generally, we see from the property of a multiplicative function stated in \cite[Theorem 2.18]{apostol} that for $q\in\mathbb{N}$, $\alpha\in\mathbb{C}$,
\begin{equation}\label{eq2.7}
\sum_{d\mid q}\mu\biggl(\frac{q}{d}\biggl)d^{\alpha}=q^{\alpha}\sum_{d\mid q}\mu(d)\frac{1}{d^{\alpha}}=J_{\alpha}(q).
\end{equation}
Therefore, taking $b=1$ in Theorem \ref{thm2.1}, in light of \eqref{eq2.5}, \eqref{eq2.6} and \eqref{eq2.7}, we get the desired result.
\end{proof}

It becomes obvious from $\overline{B}_{1}(0)=0$ and $B_{2}=1/6$ that the case $m=n=1$ in Corollary \ref{cor2.2} gives that for $q,a\in\mathbb{N}$ with $q\geq2$ and $(a,q)=1$,
\begin{eqnarray}\label{eq2.8}
V_{\chi,q}(1,1;a,1)
&=&\frac{\pi^{2}\phi(q)^{2}}{12aq^{2}}\biggl(q\prod_{\substack{p\mid q\\ \text{$p$ prime}}}\biggl(1+\frac{1}{p}\biggl)-3a\biggl)\nonumber\\
&&-\frac{\pi^{2}\phi(q)}{q^{2}}\sum_{d\mid q}\mu\biggl(\frac{q}{d}\biggl)d\sum_{l=1}^{a}\overline{B}_{1}\biggl(\frac{l}{a}\biggl)\overline{B}_{1}\biggl(\frac{dl}{a}\biggl),
\end{eqnarray}
which is equivalent to Louboutin's \cite[Theorem 1]{louboutin3} formula.
We remark that Corollary \ref{cor2.2} extends the results of Liu \cite{liu2}, Das and Juyal \cite{das}, and provides a positive answer for the problem posed by Louboutin \cite[Section 4]{louboutin3}. In fact, we can also use Corollary \ref{cor2.2} to give an equivalent version of Liu and Zhang's formula \eqref{eq1.9} in the following way.

\begin{corollary}\label{cor2.3} Let $q,m,n\in\mathbb{N}$ with $q\geq2$. Then
\begin{eqnarray}\label{eq2.9}
&&V_{\chi,q}(m,n;1,1)\nonumber\\
&&=\frac{(-1)^{\frac{m-n}{2}}(2\pi)^{m+n}\phi(q)}{4q^{m+n}m!n!}\biggl(n\sum_{j=1}^{[\frac{m}{2}]}\binom{m}{2j}\frac{B_{2j}\overline{B}_{m+n-2j}(0)}{m+n-2j}J_{2j}(q)\nonumber\\
&&\quad+m\sum_{j=1}^{[\frac{n}{2}]}\binom{n}{2j}\frac{B_{2j}\overline{B}_{m+n-2j}(0)}{m+n-2j}J_{2j}(q)\nonumber\\
&&\quad+\frac{(-1)^{n-1}m!n!B_{m+n}J_{m+n}(q)}{(m+n)!}-\delta_{1,m}\delta_{1,n}\frac{\phi(q)}{4}\biggl),
\end{eqnarray}
where $[x]$ denotes the floor function (also called the greatest integer function) defined for $x\in\mathbb{R}$ by
\begin{equation*}
[x]=x-\{x\},
\end{equation*}
the summation in the right hand side of \eqref{eq2.9} vanishes when $m=1$ or $n=1$.
\end{corollary}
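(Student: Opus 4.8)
The plan is to obtain \eqref{eq2.9} directly from Corollary \ref{cor2.2} by setting $a=1$ and simplifying the Bernoulli factors, all of whose arguments then become integers. First I would put $a=1$ in \eqref{eq2.3}: every power of $a$ reduces to $1$, the inner sum $\sum_{l=1}^{a}$ collapses to the single term $l=1$, the argument $l/a$ becomes $1$, and each argument $dl/a$ becomes the integer $d$. Since for $k\geq2$ the function $\overline{B}_{k}(x)$ depends only on $\{x\}$, and $\overline{B}_{1}$ vanishes at integers by \eqref{eq2.1}, I would record the evaluations $\overline{B}_{k}(1)=\overline{B}_{k}(0)$ and $\overline{B}_{k}(d)=\overline{B}_{k}(0)$ for every $d\in\mathbb{Z}$. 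With these, both the first and the second bracketed sums in \eqref{eq2.3} take the common shape $\sum_{j}\binom{\cdot}{j}\frac{(-1)^{j}}{m+n-j}\overline{B}_{j}(0)\overline{B}_{m+n-j}(0)(\cdots)$, the first already carrying the factor $J_{j}(q)$.

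Next I would apply \eqref{eq2.7} to the divisor sum in the second term: because the outer Bernoulli argument no longer depends on $d$, the factor $\sum_{d\mid q}\mu(q/d)d^{j}$ equals $J_{j}(q)$, so the second sum acquires exactly the same $J_{j}(q)$ weight as the first. The decisive simplification is then that $\overline{B}_{j}(0)=0$ for every odd $j$: indeed $\overline{B}_{1}(0)=0$ by the convention in \eqref{eq2.1}, while $\overline{B}_{2i+1}(0)=B_{2i+1}=0$ for $i\geq1$. Hence only even indices $j=2j'$ survive; reindexing, using $\overline{B}_{2j'}(0)=B_{2j'}$ and $(-1)^{2j'}=1$ to remove the sign, turns the two sums into $\sum_{j'=1}^{[m/2]}$ and $\sum_{j'=1}^{[n/2]}$ with summand $\binom{m}{2j'}\frac{B_{2j'}B_{m+n-2j'}}{m+n-2j'}J_{2j'}(q)$ (resp.\ with $n$ in place of $m$), precisely the first two lines of \eqref{eq2.9}; the remaining two terms of \eqref{eq2.3} transcribe verbatim at $a=1$.

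The only delicate point I anticipate, and the one to treat with care, is the boundary factor $\overline{B}_{m+n-2j'}(0)$ when its index degenerates to $1$ — this happens for the top term $j'=m/2$ exactly when $n=1$ (and symmetrically for $j'=n/2$ when $m=1$). There one must use $\overline{B}_{1}(0)=0$ rather than the naive Bernoulli number $B_{1}=-\frac{1}{2}$, which is precisely the content of the closing remark that the summation in \eqref{eq2.9} vanishes when $m=1$ or $n=1$. This convention is what keeps the formula consistent with the vanishing of $V_{\chi,q}(m,n;1,1)$ forced by opposite parities of $m$ and $n$ (for which the whole right-hand side must collapse to $0$); apart from this bookkeeping the derivation is routine.
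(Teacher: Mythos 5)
Your proposal is correct and takes essentially the same route as the paper: the paper likewise sets $a=1$ in Corollary \ref{cor2.2}, applies \eqref{eq2.7} to turn the divisor sum into $J_{j}(q)$ (arriving at its intermediate identity \eqref{eq2.10}), and then kills the odd indices via $\overline{B}_{1}(0)=0$ and $B_{2n+1}=0$. Your closing remark about the degenerate factor $\overline{B}_{m+n-2j}(0)$ of index $1$ is precisely what the paper's convention that the sums vanish when $m=1$ or $n=1$ is bookkeeping for, so nothing is missing.
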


\begin{proof}
By setting $a=1$ in Corollary \ref{cor2.2} and then using \eqref{eq2.7}, we have
\begin{eqnarray}\label{eq2.10}
&&V_{\chi,q}(m,n;1,1)\nonumber\\
&&=\frac{(-1)^{\frac{m-n}{2}}(2\pi)^{m+n}\phi(q)}{4q^{m+n}m!n!}\biggl(n\sum_{j=1}^{m}\binom{m}{j}\frac{(-1)^{j}}{m+n-j}\overline{B}_{j}(0)\overline{B}_{m+n-j}(0)J_{j}(q)\nonumber\\
&&\quad+m\sum_{j=1}^{n}\binom{n}{j}\frac{(-1)^{j}}{m+n-j}\overline{B}_{j}(0)\overline{B}_{m+n-j}(0)J_{j}(q)\nonumber\\
&&\quad+\frac{(-1)^{n-1}m!n!B_{m+n}J_{m+n}(q)}{(m+n)!}-\delta_{1,m}\delta_{1,n}\frac{\phi(q)}{4}\biggl).
\end{eqnarray}
Since $\{-x\}=1-\{x\}$ in the case when $x\in\mathbb{R}\setminus\mathbb{Z}$, so by $B_{n}(1-x)=(-1)^{n}B_{n}(x)$ for $n\in\mathbb{N}_{0}$ and $x\in\mathbb{R}$ and $B_{2n+1}=0$ for $n\in\mathbb{N}$ (see, e.g., \cite[pp. 804--805]{abramowitz}), we see from \eqref{eq2.1} that for $n\in\mathbb{N}_{0}$ and $x\in\mathbb{R}$,
\begin{equation}\label{eq2.11}
\overline{B}_{n}(-x)=(-1)^{n}\overline{B}_{n}(x).
\end{equation}
Thus, by applying \eqref{eq2.11}, $\overline{B}_{1}(0)=0$ and $B_{2n+1}=0$ for $n\in\mathbb{N}$ to the right hand side of \eqref{eq2.10}, the desired result follows immediately.
\end{proof}

Clearly, the case $m=n=1$ in Corollary \ref{cor2.3} leads to Zhang's formula \eqref{eq1.6}. And the case $m=n$ in Corollary \ref{cor2.3} also yields that for $q,n\in\mathbb{N}$ with $q,n\geq2$,
\begin{eqnarray}\label{eq2.12}
V_{\chi,q}(n,n;1,1)
&=&\frac{(2\pi)^{2n}\phi(q)}{4q^{2n}(n!)^{2}}\biggl(2n\sum_{j=1}^{[\frac{n}{2}]}\binom{n}{2j}\frac{B_{2j}B_{2n-2j}}{2n-2j}J_{2j}(q)\nonumber\\
&&-\frac{(-1)^{n}(n!)^{2}B_{2n}J_{2n}(q)}{(2n)!}\biggl).
\end{eqnarray}
In particular, the case $n=2,3$ in \eqref{eq2.12} gives that for $q\geq2$,
\begin{equation}\label{eq2.13}
V_{\chi,q}(2,2;1,1)=\frac{\pi^{4}\phi(q)}{180q^{4}}\bigl(J_{4}(q)+10J_{2}(q)\bigl),
\end{equation}
and
\begin{equation}\label{eq2.14}
V_{\chi,q}(3,3;1,1)=\frac{\pi^{6}\phi(q)}{1890q^{6}}\bigl(J_{6}(q)-21J_{2}(q)\bigl).
\end{equation}
We here mention that the formulas \eqref{eq2.13} and \eqref{eq2.14} were first obtained by Zhang \cite[Corollaries 2 and 3]{zhang6}, and were rediscovered by Louboutin \cite[Theorem 2]{louboutin2} and Alkan \cite[Theorem 2]{alkan2}.

We next give the mean values of products of three Dirichlet $L$-functions at positive integers weighted by Dirichlet characters as follows.

\begin{theorem}\label{thm2.4} Let $q,m_{1},m_{2},m_{3},a,b,c\in\mathbb{N}$ with $q,m_{3}\geq2$, $m_{1}+m_{2}\equiv m_{3}$ (mod $2$) and $(a,q)=(b,q)=(c,q)=1$. Then
\begin{eqnarray}\label{eq2.15}
&&V_{\chi_{1},\chi_{2},q}(m_{1},m_{2},m_{3};a,b,c)\nonumber\\
&&=\frac{(-1)^{\frac{m_{1}-m_{2}-m_{3}}{2}}(2\pi)^{m_{1}+m_{2}+m_{3}}(b,c)^{m_{2}+m_{3}}\phi(q)^{2}}{8q^{m_{1}+m_{2}+m_{3}}b^{m_{3}}c^{m_{2}}m_{1}!(m_{2}+m_{3})!}\nonumber\\
&&\qquad\times\biggl(R\biggl(m_{1},m_{2}+m_{3},q;a,\frac{bc}{(b,c)}\biggl)+R\biggl(m_{2}+m_{3},m_{1},q;\frac{bc}{(b,c)},a\biggl)\biggl)\nonumber\\
&&\quad-\frac{(-1)^{\frac{m_{1}+m_{2}-m_{3}}{2}}(2\pi)^{m_{1}+m_{2}+m_{3}}\phi(q)^{2}}{8q^{m_{1}+m_{2}+m_{3}}m_{1}!m_{2}!m_{3}!}\nonumber\\
&&\qquad\times \bigl(A_{m_{1},m_{2},m_{3},q}(a,b,c)+(-1)^{m_{1}}A_{m_{1},m_{3},m_{2},q}(a,c,b)\nonumber\\
&&\qquad\quad+B_{m_{1},m_{2},m_{3},q}(a,b,c)+(-1)^{m_{1}}B_{m_{1},m_{3},m_{2},q}(a,c,b)\nonumber\\
&&\qquad\quad+C_{m_{1},m_{2},m_{3},q}(a,b,c)+(-1)^{m_{1}}C_{m_{1},m_{3},m_{2},q}(a,c,b)\nonumber\\
&&\qquad\quad-D_{m_{1},m_{2},m_{3},q}(b,c)+D_{m_{1},m_{3},m_{2},q}(c,b)+C_{2}\bigl),
\end{eqnarray}
where $R(m_{1},m_{2},q;a,b)$ is as in \eqref{eq2.2},
\begin{eqnarray*}
&&A_{m_{1},m_{2},m_{3},q}(a,b,c)\\
&&=m_{3}b^{m_{2}-1}c^{m_{3}-1}\sum_{j=1}^{m_{2}}\binom{m_{2}}{j}\frac{(-1)^{j}j}{m_{2}+m_{3}-j}\sum_{j_{1}=0}^{m_{1}}\binom{m_{1}}{j_{1}}
\frac{(-1)^{j_{1}}a^{m_{1}-j_{1}}}{m_{1}+j-j_{1}}\\
&&\quad\times\sum_{l=1}^{c}\sum_{l_{1}=1}^{b}\overline{B}_{j_{1}}\biggl(\frac{al_{1}}{b}+\frac{al}{c}\biggl)\sum_{d\mid q}\mu\biggl(\frac{q}{d}\biggl)d^{j_{1}}\overline{B}_{m_{2}+m_{3}-j}\biggl(\frac{dl}{c}\biggl)\nonumber\\
&&\quad\times\overline{B}_{m_{1}+j-j_{1}}\biggl(\frac{dl_{1}}{b}+\frac{dl}{c}\biggl),
\end{eqnarray*}
\begin{eqnarray*}
&&B_{m_{1},m_{2},m_{3},q}(a,b,c)\\
&&=m_{1}m_{2}a^{m_{1}-1}b^{m_{2}-1}\sum_{j=1}^{m_{3}}\binom{m_{3}}{j}\frac{1}{m_{2}+m_{3}-j}\sum_{j_{1}=0}^{j}\binom{j}{j_{1}}\frac{(-1)^{j_{1}}c^{m_{3}-j_{1}}}{m_{1}+j-j_{1}}\\
&&\quad\times\sum_{l=1}^{b}\sum_{l_{1}=1}^{a}\overline{B}_{j_{1}}\biggl(\frac{cl_{1}}{a}+\frac{cl}{b}\biggl)\sum_{d\mid q}\mu\biggl(\frac{q}{d}\biggl)d^{j_{1}}\overline{B}_{m_{2}+m_{3}-j}\biggl(\frac{dl}{b}\biggl)
\overline{B}_{m_{1}+j-j_{1}}\biggl(\frac{dl_{1}}{a}\biggl),
\end{eqnarray*}
\begin{eqnarray*}
&&C_{m_{1},m_{2},m_{3},q}(a,b,c)\\
&&=-\frac{m_{2}m_{1}!b^{m_{2}-1}(a,c)^{m_{1}}}{c^{m_{1}}}\sum_{j=1}^{m_{3}}\binom{m_{3}}{j}\frac{(-1)^{j}(a,c)^{j}c^{m_{3}-j}j!}{a^{j}(m_{2}+m_{3}-j)(m_{1}+j)!}\\
&&\quad\times\sum_{l=1}^{b}\overline{B}_{m_{1}+j}\biggl(\frac{\frac{ac}{(a,c)}l}{b}\biggl)\sum_{d\mid q}\mu\biggl(\frac{q}{d}\biggl)d^{m_{1}+j}\overline{B}_{m_{2}+m_{3}-j}\biggl(\frac{dl}{b}\biggl),
\end{eqnarray*}
\begin{eqnarray*}
&&D_{m_{1},m_{2},m_{3},q}(b,c)\\
&&=\frac{\delta_{1,m_{1}}m_{2}m_{3}b^{m_{2}-1}c^{m_{3}-1}}{4(m_{2}+m_{3}-1)}
\sum_{d\mid q}\mu\biggl(\frac{q}{d}\biggl)d^{m_{1}}\sum_{l=1}^{b}\overline{B}_{m_{2}+m_{3}-1}\biggl(\frac{dl}{b}\biggl)\delta_{\mathbb{Z}}\biggl(\frac{cl}{b}\biggl),
\end{eqnarray*}
\begin{equation*}
C_{2}=\frac{(-1)^{m_{1}+m_{2}}m_{1}!m_{2}!m_{3}!\bigl(a(b,c),bc\bigl)^{m_{1}+m_{2}+m_{3}}B_{m_{1}+m_{2}+m_{3}}J_{m_{1}+m_{2}+m_{3}}(q)}{a^{m_{2}+m_{3}}b^{m_{1}+m_{3}}
c^{m_{1}+m_{2}}(m_{1}+m_{2}+m_{3})!}.
\end{equation*}
\end{theorem}

It is evident that the case $a=b=c=1$ and $m_{3}=m_{1}+m_{2}$ in Theorem \ref{thm2.4} improves Okamoto and Onozuka's \cite[Theorem 1.1]{okamoto} formula.
In particular, we have the following results.

\begin{corollary}\label{cor2.5} Let $q,n\in\mathbb{N}$ with $q\geq2$ and $2\mid n$. Then
\begin{equation}\label{eq2.16}
V_{\chi_{1},\chi_{2},q}(1,1,n;1,1,1)=\frac{(-1)^{\frac{n}{2}}(2\pi)^{n+2}\phi(q)^{2}}{8q^{n+2}n!}(M_{1}+M_{2}+M_{3}),
\end{equation}
where
\begin{equation*}
M_{1}=2\sum_{j=1}^{\frac{n}{2}}\binom{n}{2j}\frac{B_{2j}B_{n+2-2j}J_{2j}(q)}{(n+2-2j)(n+1-2j)},
\end{equation*}
\begin{equation*}
M_{2}=\sum_{j=3}^{n-1}\binom{n}{j}\frac{B_{n+1-j}}{n+1-j}\sum_{l=2}^{j-1}\binom{j}{l}\frac{B_{l}
B_{j+1-l}J_{l}(q)}{j+1-l},
\end{equation*}
\begin{equation*}
M_{3}=-\frac{B_{2}B_{n}J_{2}(q)}{2}+\frac{B_{n+2}J_{n+2}(q)}{(n+2)(n+1)},
\end{equation*}
in which $M_{2}$ vanishes when $n=2$.
\end{corollary}

\begin{proof}
Taking $m_{1}=m_{2}=a=b=c=1$ in Theorem \ref{thm2.4}, in view of \eqref{eq2.6}, \eqref{eq2.7} and \eqref{eq2.11}, we know that for $q,n\in\mathbb{N}$ with $q\geq2$ and $2\mid n$,
\begin{eqnarray}\label{eq2.17}
&&V_{\chi_{1},\chi_{2},q}(1,1,n;1,1,1)\nonumber\\
&&=\frac{(-1)^{\frac{n}{2}}(2\pi)^{n+2}\phi(q)^{2}}{8q^{n+2}n!}\biggl(\frac{R(n+1,1,q;1,1)}{n+1}+B_{1,1,n,q}(1,1,1)\nonumber\\
&&\quad+C_{1,1,n,q}(1,1,1)-\frac{B_{2}B_{n}J_{2}(q)}{2}+\frac{B_{n+2}J_{n+2}(q)}{(n+2)(n+1)}\biggl),
\end{eqnarray}
where
\begin{equation*}
R(n+1,1,q;1,1)=\sum_{j=2}^{n}\binom{n+1}{j}\frac{B_{j}B_{n+2-j}J_{j}(q)}{n+2-j},
\end{equation*}
\begin{equation*}
B_{1,1,n,q}(1,1,1)
=\sum_{j=3}^{n-1}\binom{n}{j}\frac{B_{n+1-j}}{n+1-j}\sum_{l=2}^{j-1}\binom{j}{l}\frac{B_{l}
B_{j+1-l}J_{l}(q)}{j+1-l},
\end{equation*}
\begin{equation*}
C_{1,1,n,q}(1,1,1)
=\sum_{j=1}^{n-1}\binom{n}{j}\frac{B_{j+1}B_{n+1-j}J_{j+1}(q)}{(n+1-j)(j+1)}.
\end{equation*}
Observe that
\begin{eqnarray}\label{eq2.18}
&&\frac{R(n+1,1,q;1,1)}{n+1}+C_{1,1,n,q}(1,1,1)\nonumber\\
&&=\sum_{j=2}^{n}\binom{n}{j}\frac{B_{j}B_{n+2-j}J_{j}(q)}{(n+2-j)(n+1-j)}+\sum_{j=2}^{n}\binom{n}{j-1}\frac{B_{j}B_{n+2-j}J_{j}(q)}{(n+2-j)j}\nonumber\\
&&=2\sum_{j=2}^{n}\binom{n}{j}\frac{B_{j}B_{n+2-j}J_{j}(q)}{(n+2-j)(n+1-j)}.
\end{eqnarray}
Therefore, by inserting \eqref{eq2.18} into \eqref{eq2.17}, in view of $B_{2n+1}=0$ for $n\in\mathbb{N}$, we get the desired result.
\end{proof}

It is easy to check that the case $n=2$ in Corollary \ref{cor2.5} gives Alkan's formula \eqref{eq1.10}. If we take $n=4$ in Corollary \ref{cor2.5} then we have
\begin{equation}\label{eq2.19}
V_{\chi_{1},\chi_{2},q}(1,1,4;1,1,1)=\frac{\pi^{6}\phi(q)^{2}}{3780q^{4}}\bigl(J_{6}(q)-7J_{4}(q)+14J_{2}(q)\bigl)\quad(q\geq2).
\end{equation}

\begin{corollary}\label{cor2.6} Let $q,n\in\mathbb{N}$ with $q,n\geq2$ and $2\nmid n$. Then
\begin{equation}\label{eq2.20}
V_{\chi_{1},\chi_{2},q}(1,2,n;1,1,1)=\frac{(-1)^{\frac{n+1}{2}}(2\pi)^{n+3}\phi(q)^{2}}{8q^{n+3}n!}(N_{1}+N_{2}+N_{3}),
\end{equation}
where
\begin{equation*}
N_{1}=-\sum_{j=1}^{\frac{n-1}{2}}\binom{n}{2j}\frac{B_{2j}B_{n+3-2j}J_{2j}(q)}{(n+3-2j)(n+2-2j)},
\end{equation*}
\begin{equation*}
N_{2}=-\sum_{j=3}^{n}\binom{n}{j}\frac{B_{n+2-j}}{n+2-j}\sum_{l=2}^{j-1}\binom{j}{l}\frac{B_{l}
B_{j+1-l}J_{l}(q)}{j+1-l},
\end{equation*}
\begin{equation*}
N_{3}=\frac{nB_{2}B_{n+1}J_{2}(q)}{2(n+1)}+\frac{B_{n+3}J_{n+3}(q)}{(n+3)(n+2)(n+1)}.
\end{equation*}
\end{corollary}

\begin{proof}
Setting $m_{1}=a=b=c=1$ and $m_{2}=2$ in Theorem \ref{thm2.4}, in light of \eqref{eq2.6}, \eqref{eq2.7} and \eqref{eq2.11}, we conclude that for $q,n\in\mathbb{N}$ with $q,n\geq2$ and $2\nmid n$,
\begin{eqnarray}\label{eq2.21}
&&V_{\chi_{1},\chi_{2},q}(1,2,n;1,1,1)\nonumber\\
&&=\frac{(-1)^{\frac{n+1}{2}}(2\pi)^{n+3}\phi(q)^{2}}{8q^{n+3}n!}\biggl(\frac{R(n+2,1,q;1,1)}{(n+2)(n+1)}-B_{1,2,n,q}(1,1,1)\nonumber\\
&&\quad-C_{1,2,n,q}(1,1,1)+\frac{nB_{2}B_{n+1}J_{2}(q)}{2(n+1)}+\frac{B_{n+3}J_{n+3}(q)}{(n+3)(n+2)(n+1)}\biggl),
\end{eqnarray}
where
\begin{equation*}
R(n+2,1,q;1,1)=\sum_{j=2}^{n+1}\binom{n+2}{j}\frac{B_{j}B_{n+3-j}J_{j}(q)}{n+3-j},
\end{equation*}
\begin{equation*}
B_{1,2,n,q}(1,1,1)
=\sum_{j=3}^{n}\binom{n}{j}\frac{B_{n+2-j}}{n+2-j}\sum_{l=2}^{j-1}\binom{j}{l}\frac{B_{l}
B_{j+1-l}J_{l}(q)}{j+1-l},
\end{equation*}
\begin{equation*}
C_{1,2,n,q}(1,1,1)
=\sum_{j=1}^{n}\binom{n}{j}\frac{B_{j+1}B_{n+2-j}J_{j+1}(q)}{(n+2-j)(j+1)}.
\end{equation*}
Since
\begin{eqnarray}\label{eq2.22}
&&\frac{R(n+2,1,q;1,1)}{(n+2)(n+1)}-C_{1,2,n,q}(1,1,1)\nonumber\\
&&=\frac{1}{n+1}\sum_{j=2}^{n+1}\binom{n+1}{j}\frac{B_{j}B_{n+3-j}J_{j}(q)}{(n+3-j)(n+2-j)}-\sum_{j=2}^{n+1}\binom{n}{j-1}\frac{B_{j}B_{n+3-j}J_{j}(q)}{(n+3-j)j}\nonumber\\
&&=-\sum_{j=2}^{n}\binom{n}{j}\frac{B_{j}B_{n+3-j}J_{j}(q)}{(n+3-j)(n+2-j)},
\end{eqnarray}
by inserting \eqref{eq2.22} into \eqref{eq2.21} and then using $B_{2n+1}=0$ for $n\in\mathbb{N}$, the desired result follows immediately.
\end{proof}

It is trivial to see that the case $n=3$ in Corollary \ref{cor2.6} gives
\begin{equation}\label{eq2.23}
V_{\chi_{1},\chi_{2},q}(1,2,3;1,1,1)=\frac{\pi^{6}\phi(q)^{2}}{3780q^{6}}\bigl(J_{6}(q)-21J_{2}(q)\bigl)\quad(q\geq2),
\end{equation}
which is due to Okamoto and Onozuka \cite[p. 113]{okamoto}.
If we take $n=5$ in Corollary \ref{cor2.6} then we have
\begin{equation}\label{eq2.24}
V_{\chi_{1},\chi_{2},q}(1,2,5;1,1,1)=\frac{\pi^{8}\phi(q)^{2}}{37800q^{8}}\bigl(J_{8}(q)-7J_{4}(q)-50J_{2}(q)\bigl)\quad(q\geq2).
\end{equation}
In a similar consideration to Corollaries \ref{cor2.5} and \ref{cor2.6}, one can also easily extend Okamoto and Onozuka's \cite[pp. 113--114]{okamoto} another two formulas, and give the explicit formulas for $V_{\chi_{1},\chi_{2},q}(1,3,n;1,1,1)$ and $V_{\chi_{1},\chi_{2},q}(2,2,n;1,1,1)$ for $q,n\in\mathbb{N}$ with $q,n\geq2$ and $2\mid n$. This is left to the interested reader for an exercise.

\section{Some auxiliary results}

Before giving our proofs of Theorems \ref{thm2.1} and \ref{thm2.4}, we need the following auxiliary lemmas. We first present the following result.

\begin{lemma}\label{lem3.1} Let $m,n\in\mathbb{N}$, $a,b\in\mathbb{Z}$ with $a\not=0$ and $b\not=0$. Then, for $x,y,z\in\mathbb{R}$,
\begin{eqnarray}\label{eq3.1}
&&\overline{B}_{m}(ax+y)\overline{B}_{n}(bx+z)\nonumber\\
&&=nb^{n-1}\mathrm{sgn}(b)\sum_{j=0}^{m}\binom{m}{j}\frac{(-1)^{j}a^{m-j}}{m+n-j}\sum_{l=1}^{|b|}\overline{B}_{j}\biggl(\frac{a(l+z)}{b}-y\biggl)\nonumber\\
&&\qquad\times\overline{B}_{m+n-j}\biggl(x+\frac{l+z}{b}\biggl)\nonumber\\
&&\quad+ma^{m-1}\mathrm{sgn}(a)\sum_{j=0}^{n}\binom{n}{j}\frac{(-1)^{j}b^{n-j}}{m+n-j}\sum_{l=1}^{|a|}\overline{B}_{j}\biggl(\frac{b(l+y)}{a}-z\biggl)\nonumber\\
&&\qquad\times\overline{B}_{m+n-j}\biggl(x+\frac{l+y}{a}\biggl)\nonumber\\
&&\quad+\frac{(-1)^{n-1}m!n!(a,b)^{m+n}\overline{B}_{m+n}\bigl(\frac{by-az}{(a,b)}\bigl)}{a^{n}b^{m}(m+n)!}\nonumber\\
&&\quad-\delta_{1,m}\delta_{1,n}\mathrm{sgn}(ab)\frac{\delta_{\mathbb{Z}}(ax+y)\delta_{\mathbb{Z}}(bx+z)}{4},
\end{eqnarray}
where $\delta_{\mathbb{Z}}(x)=1$ or $0$ according to $x\in\mathbb{Z}$ or $x\not\in\mathbb{Z}$, $\mathrm{sgn}(x)$ is the sign of $x\in\mathbb{R}\setminus\{0\}$ given by $\mathrm{sgn}(x)=x/|x|$.
\end{lemma}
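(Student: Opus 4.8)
The plan is to expand both Bernoulli functions into their Fourier series and then reorganize the resulting double sum according to the frequency of the exponential in $x$. Recall that $\overline{B}_0\equiv1$ and, for $N\ge1$,
\begin{equation*}
\overline{B}_N(t)=-\frac{N!}{(2\pi\mathrm{i})^N}\sum_{k\ne0}\frac{e^{2\pi\mathrm{i}kt}}{k^N},
\end{equation*}
the case $N=1$ being summed symmetrically so as to represent $\overline{B}_1$ in the sense of \eqref{eq2.1} (with value $0$ at the integers). Substituting these expansions for $\overline{B}_m(ax+y)$ and $\overline{B}_n(bx+z)$ and multiplying, the left-hand side of \eqref{eq3.1} becomes
\begin{equation*}
\frac{m!\,n!}{(2\pi\mathrm{i})^{m+n}}\sum_{j\ne0}\sum_{k\ne0}\frac{e^{2\pi\mathrm{i}(jy+kz)}}{j^mk^n}\,e^{2\pi\mathrm{i}(ja+kb)x}.
\end{equation*}
I would split this according to whether the frequency $s:=ja+kb$ vanishes.

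The terms with $s=0$ are independent of $x$. Writing $a=(a,b)a'$ and $b=(a,b)b'$ with $(a',b')=1$, the equation $ja+kb=0$ forces $j=b't$, $k=-a't$ with $t\ne0$; collapsing the resulting single series in $t$ and re-applying the Fourier expansion of $\overline{B}_{m+n}$ reproduces exactly the term
\begin{equation*}
\frac{(-1)^{n-1}m!\,n!\,(a,b)^{m+n}\overline{B}_{m+n}\bigl(\tfrac{by-az}{(a,b)}\bigr)}{a^nb^m(m+n)!},
\end{equation*}
since $(a,b)^{m+n}/(a^nb^m)=1/\bigl((a')^n(b')^m\bigr)$ is precisely the constant produced by the substitution.

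For the terms with $s\ne0$ I would decouple $j$ and $k$ by partial fractions. Using $k=(s-ja)/b$ and splitting $1/\bigl(j^m(s-ja)^n\bigr)$ into its principal parts at $j=0$ and $j=s/a$ gives
\begin{equation*}
\frac{1}{j^mk^n}=\sum_{i=1}^{m}\binom{m+n-i-1}{m-i}\frac{a^{m-i}b^n}{s^{m+n-i}}\frac{1}{j^i}+\sum_{i=1}^{n}\binom{m+n-i-1}{n-i}\frac{a^mb^{n-i}}{s^{m+n-i}}\frac{1}{k^i}.
\end{equation*}
In the first family, for each fixed $j$ the $x$-exponential is summed over the arithmetic progression $s\equiv ja\pmod b$; finite Fourier (orthogonality) inversion turns $\sum_s s^{-(m+n-i)}e^{2\pi\mathrm{i}s(\cdot)}$ into $|b|^{-1}\sum_{l=1}^{|b|}\overline{B}_{m+n-i}\bigl(x+\tfrac{l+z}{b}\bigr)$, while the residual sum over $j$ produces $\overline{B}_i$, whose argument I reflect via $\overline{B}_i(-u)=(-1)^i\overline{B}_i(u)$ to obtain the factor $(-1)^i\overline{B}_i\bigl(\tfrac{a(l+z)}{b}-y\bigr)$. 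A hockey-stick simplification collapses $\binom{m+n-i-1}{m-i}$ together with the remaining factorials into $\binom{m}{i}\tfrac{n}{m+n-i}$, and $b^n/|b|=b^{n-1}\mathrm{sgn}(b)$ supplies the sign, so that these contributions, for $i=1,\dots,m$, are exactly the $j\ge1$ summands of the first sum in \eqref{eq3.1}; the second family yields, symmetrically, the $j\ge1$ summands of the second sum.

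The delicate point in the bookkeeping is that the progression $s\equiv ja\pmod b$ runs over all $k\in\mathbb{Z}$, whereas the original sum omits $k=0$; subtracting the omitted terms $s=ja$ and summing over $j$ produces, after another hockey-stick identity, precisely $\tfrac{m\,b^n}{(m+n)a^n}\overline{B}_{m+n}(ax+y)$, which, via Raabe's formula \eqref{eq2.4}, is the missing $j=0$ summand of the second sum in \eqref{eq3.1}; symmetrically the omitted $j=0$ terms of the second family supply the $j=0$ summand of the first sum, so nothing is lost. The genuine obstacle is the case $m=n=1$, where both Fourier series converge only conditionally and the double sum is not absolutely convergent, so the rearrangements above are not a priori legitimate. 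Summing symmetrically, the sole defect occurs at the points where $ax+y$ and $bx+z$ are simultaneously integers — the jump discontinuities of $\overline{B}_1$ — and the mismatch between the symmetric value and the rearranged sum equals $\tfrac14$ in modulus with sign $\mathrm{sgn}(ab)$, which is the correction $-\delta_{1,m}\delta_{1,n}\mathrm{sgn}(ab)\delta_{\mathbb{Z}}(ax+y)\delta_{\mathbb{Z}}(bx+z)/4$. Carrying out this last analysis rigorously — taming the non-absolutely convergent double sum and isolating the $\tfrac14$ defect — is the hardest step.
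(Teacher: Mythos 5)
You should first know that the paper contains no internal proof of Lemma \ref{lem3.1}: its ``proof'' is a citation to \cite[Theorem 3.1]{he3}, so your outline can only be compared with the standard argument in that tradition (going back to Hall--Wilson--Zagier \cite{hall}), which is indeed the route you take: expand both factors by the Fourier series \eqref{eq3.6}, sort the double sum by the frequency $s=ja+kb$, handle $s=0$ via $j=b't$, $k=-a't$ with $a=(a,b)a'$, $b=(a,b)b'$, and decouple the $s\neq0$ part by partial fractions. In the absolutely convergent range $m,n\geq2$ your bookkeeping is correct, and I verified the nontrivial points: the $s=0$ piece produces $\frac{(-1)^{n-1}m!\,n!\,(a,b)^{m+n}}{a^nb^m(m+n)!}\overline{B}_{m+n}\bigl(\frac{by-az}{(a,b)}\bigr)$ since $(a')^n(b')^m=a^nb^m/(a,b)^{m+n}$; the coefficient simplification $\frac{m!\,n!}{i!\,(m+n-i)!}\binom{m+n-i-1}{m-i}=\frac{n}{m+n-i}\binom{m}{i}$ holds; the roots-of-unity filter applied to $s\equiv ja \ (\mathrm{mod}\ b)$ does yield the arguments $x+\frac{l+z}{b}$ and, after the reflection $\overline{B}_i(-u)=(-1)^i\overline{B}_i(u)$, the factors $(-1)^i\overline{B}_i\bigl(\frac{a(l+z)}{b}-y\bigr)$, with $b^n/|b|=b^{n-1}\mathrm{sgn}(b)$; and the omitted $k=0$ terms sum, by $\sum_{i=1}^{m}\binom{m+n-i-1}{m-i}=\binom{m+n-1}{m-1}$, to $\frac{m\,b^n}{(m+n)a^n}\overline{B}_{m+n}(ax+y)$, which Raabe's formula \eqref{eq2.4} identifies with the $j=0$ summand of the second sum (this also works for negative $a,b$, where the extra $(-1)^{m+n}$ from $\overline{B}_{m+n}(-u)$ cancels against $\mathrm{sgn}(a)^{-(m+n)}$).

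The genuine gap is the one you name yourself and then leave open, and it is precisely where the lemma's content lies. First, the convergence problem is not confined to $m=n=1$: whenever $\min(m,n)=1$ the double series fails to converge absolutely, so the split by frequency, the partial-fraction regrouping, and the interchange of the residue filter with the $s$-sum all need justification (symmetric partial sums with an Abel-summation argument, or proving the identity for $m,n\geq2$ and descending to the boundary cases by a limiting argument); your sketch treats these cases as unproblematic. Second, and more seriously, the term $-\delta_{1,m}\delta_{1,n}\,\mathrm{sgn}(ab)\,\delta_{\mathbb{Z}}(ax+y)\delta_{\mathbb{Z}}(bx+z)/4$ is asserted (``the mismatch \dots equals $\frac14$ in modulus with sign $\mathrm{sgn}(ab)$'') with no derivation of either the magnitude $\frac14$ or the factor $\mathrm{sgn}(ab)$. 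The missing computation is this: at a point where $ax+y\in\mathbb{Z}$ and $bx+z\in\mathbb{Z}$ simultaneously, the left-hand side equals $0\cdot 0=0$ by the normalization \eqref{eq2.1}, whereas the rearranged series converges symmetrically to the average of the one-sided limits of the product, which is $\bigl(\mp\frac12\bigr)\bigl(\mp\frac12\bigr)$ averaged over the two sides, namely $\mathrm{sgn}(ab)/4$ for every sign pattern of $a,b$; subtracting this defect is exactly the last term of \eqref{eq3.1}. Until that analysis of the conditionally convergent case is written out, what you have is a correct and well-organized plan for the proof, not a proof.
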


\begin{proof}
See \cite[Theorem 3.1]{he2} for details.
\end{proof}

We next use Lemma \ref{lem3.1} to product the following results.

\begin{lemma}\label{lem3.2} Let $q,m,n,a,b\in\mathbb{N}$ with $(a,q)=(b,q)=1$. Then
\begin{eqnarray}\label{eq3.2}
&&\sum_{d\mid q}\mu\biggl(\frac{q}{d}\biggl)d^{m+n-1}\sum_{j=0}^{d-1}\overline{B}_{m}\biggl(\frac{aj}{d}\biggl)\overline{B}_{n}\biggl(\frac{bj}{d}\biggl)\nonumber\\
&&=R(m,n,q;a,b)+R(n,m,q;b,a)+C_{1},
\end{eqnarray}
where $R(m,n,q;a,b)$ and $C_{1}$ are as in \eqref{eq2.2}.
\end{lemma}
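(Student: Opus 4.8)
The plan is to apply Lemma \ref{lem3.1} to each summand $\overline{B}_m(aj/d)\overline{B}_n(bj/d)$ on the left-hand side and then carry out the summations over $j$ and $d$ term by term. Concretely, I would invoke \eqref{eq3.1} with the substitution $x=j/d$, $y=z=0$ and with the given positive integers $a,b$, so that $\mathrm{sgn}(a)=\mathrm{sgn}(b)=\mathrm{sgn}(ab)=1$, $|a|=a$, $|b|=b$, and $\frac{by-az}{(a,b)}=0$. This expresses the product as a sum of four contributions: two symmetric double sums, one term that is constant in $j$ and involves $\overline{B}_{m+n}(0)=B_{m+n}$, and one indicator term present only when $m=n=1$ (I rename the outer summation index in \eqref{eq3.1} to $i$ to avoid clashing with the residue index $j$).

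The heart of the argument is the first double sum, namely
\[
nb^{n-1}\sum_{i=0}^{m}\binom{m}{i}\frac{(-1)^{i}a^{m-i}}{m+n-i}\sum_{l=1}^{b}\overline{B}_{i}\Bigl(\frac{al}{b}\Bigr)\overline{B}_{m+n-i}\Bigl(\frac{j}{d}+\frac{l}{b}\Bigr).
\]
After multiplying by $d^{m+n-1}$ and summing over $j=0,\dots,d-1$, I would collapse the inner sum using Raabe's multiplication formula \eqref{eq2.4} with modulus $d$, order $m+n-i$, and base point $l/b$; this gives
\[
d^{\,m+n-1}\sum_{j=0}^{d-1}\overline{B}_{m+n-i}\Bigl(\frac{l}{b}+\frac{j}{d}\Bigr)=d^{\,i}\,\overline{B}_{m+n-i}\Bigl(\frac{dl}{b}\Bigr).
\]
Summing this against $\mu(q/d)$ over $d\mid q$ reproduces exactly the definition of $R(m,n,q;a,b)$ in \eqref{eq2.2}; by the $m\leftrightarrow n$, $a\leftrightarrow b$ symmetry the second double sum yields $R(n,m,q;b,a)$.

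It remains to assemble $C_1$ from the two leftover pieces. The constant term is independent of $j$, so summation over $0\le j\le d-1$ contributes a factor $d$, upgrading $d^{m+n-1}$ to $d^{m+n}$; then $\sum_{d\mid q}\mu(q/d)d^{m+n}=J_{m+n}(q)$ by \eqref{eq2.7}, producing the first summand of $C_1$. For the indicator term, active only when $m=n=1$, I would use $(a,q)=(b,q)=1$, hence $(a,d)=(b,d)=1$ for each $d\mid q$, to see that $aj/d,bj/d\in\mathbb{Z}$ hold simultaneously if and only if $d\mid j$, i.e.\ only for $j=0$ in the range $0\le j\le d-1$; thus $\sum_{j}\delta_{\mathbb{Z}}(aj/d)\delta_{\mathbb{Z}}(bj/d)=1$, and with $m+n-1=1$ the outer sum becomes $-\tfrac14\sum_{d\mid q}\mu(q/d)d=-\tfrac14 J_1(q)=-\tfrac14\phi(q)$, the second summand of $C_1$.

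The step requiring the most care will be the Raabe collapse together with the power-of-$d$ bookkeeping: one must track that the external weight $d^{m+n-1}$ exactly cancels the factor $d^{-(m+n-i-1)}$ produced by Raabe's formula, leaving the clean $d^{\,i}$ that matches the $d^{\,j}$ appearing in $R$. The only other delicate point is the coprimality reduction of the double indicator to the single value $j=0$, which is precisely what makes the final $\phi(q)$ emerge correctly.
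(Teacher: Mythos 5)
Your proposal is correct and matches the paper's own proof essentially step for step: substitute $x=j/d$, $y=z=0$ into Lemma \ref{lem3.1}, sum over $j=0,\dots,d-1$, collapse the resulting inner sums via Raabe's formula \eqref{eq2.4} so that $d^{m+n-1}$ is traded for the factor $d^{i}$ appearing in $R$, and then evaluate the M\"obius-weighted sums of the constant and indicator terms by \eqref{eq2.7}, with the coprimality argument reducing the indicator to $j=0$ and yielding $-\delta_{1,m}\delta_{1,n}\phi(q)/4$. Your bookkeeping (including renaming the outer index to avoid the clash the paper itself tolerates, and noting $\overline{B}_{m+n}(0)=B_{m+n}$ since $m+n\geq 2$) is sound, so there is nothing to correct.
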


\begin{proof}
Substituting $k/d$ for $x$, $0$ for $y$, $0$ for $z$, and then making the operation $\sum_{k=0}^{d-1}$ on both sides of \eqref{eq3.1}, we obtain that for $d,m,n,a,b\in\mathbb{N}$ with $(a,d)=(b,d)=1$,
\begin{eqnarray*}
&&\sum_{k=0}^{d-1}\overline{B}_{m}\biggl(\frac{ak}{d}\biggl)\overline{B}_{n}\biggl(\frac{bk}{d}\biggl)\\
&&=nb^{n-1}\sum_{j=0}^{m}\binom{m}{j}\frac{(-1)^{j}a^{m-j}}{m+n-j}\sum_{l=1}^{b}\overline{B}_{j}\biggl(\frac{al}{b}\biggl)
\sum_{k=0}^{d-1}\overline{B}_{m+n-j}\biggl(\frac{k}{d}+\frac{l}{b}\biggl)\\
&&\quad+ma^{m-1}\sum_{j=0}^{n}\binom{n}{j}\frac{(-1)^{j}b^{n-j}}{m+n-j}\sum_{l=1}^{a}\overline{B}_{j}\biggl(\frac{bl}{a}\biggl)
\sum_{k=0}^{d-1}\overline{B}_{m+n-j}\biggl(\frac{k}{d}+\frac{l}{a}\biggl)\\
&&\quad+\frac{(-1)^{n-1}dm!n!(a,b)^{m+n}B_{m+n}}{a^{n}b^{m}(m+n)!}-\frac{\delta_{1,m}\delta_{1,n}}{4},
\end{eqnarray*}
from which and \eqref{eq2.4} we see that for $q,m,n,a,b\in\mathbb{N}$ with $(a,q)=(b,q)=1$,
\begin{eqnarray}\label{eq3.3}
&&\sum_{d\mid q}\mu\biggl(\frac{q}{d}\biggl)d^{m+n-1}\sum_{j=0}^{d-1}\overline{B}_{m}\biggl(\frac{aj}{d}\biggl)\overline{B}_{n}\biggl(\frac{bj}{d}\biggl)\nonumber\\
&&=nb^{n-1}\sum_{j=0}^{m}\binom{m}{j}\frac{(-1)^{j}a^{m-j}}{m+n-j}\sum_{l=1}^{b}\overline{B}_{j}\biggl(\frac{al}{b}\biggl)\sum_{d\mid q}\mu\biggl(\frac{q}{d}\biggl)d^{j}\overline{B}_{m+n-j}\biggl(\frac{dl}{b}\biggl)\nonumber\\
&&\quad+ma^{m-1}\sum_{j=0}^{n}\binom{n}{j}\frac{(-1)^{j}b^{n-j}}{m+n-j}\sum_{l=1}^{a}\overline{B}_{j}\biggl(\frac{bl}{a}\biggl)\sum_{d\mid q}\mu\biggl(\frac{q}{d}\biggl)d^{j}\overline{B}_{m+n-j}\biggl(\frac{dl}{a}\biggl)\nonumber\\
&&\quad+\frac{(-1)^{n-1}m!n!(a,b)^{m+n}B_{m+n}}{a^{n}b^{m}(m+n)!}\sum_{d\mid q}\mu\biggl(\frac{q}{d}\biggl)d^{m+n}\nonumber\\
&&\quad-\frac{\delta_{1,m}\delta_{1,n}}{4}\sum_{d\mid q}\mu\biggl(\frac{q}{d}\biggl)d^{m+n-1}.
\end{eqnarray}
Thus, by applying \eqref{eq2.7} to the right hand side of \eqref{eq3.3}, we prove Lemma \ref{lem3.2}.
\end{proof}

\begin{lemma}\label{lem3.3} Let $q,m_{1},m_{2},m_{3},a,b,c\in\mathbb{N}$ with $q,m_{3}\geq2$ and $(a,q)=(b,q)=(c,q)=1$. Then
\begin{eqnarray}\label{eq3.4}
&&\sum_{d\mid q}\mu\biggl(\frac{q}{d}\biggl)d^{m_{1}+m_{2}+m_{3}-2}\sum_{s=0}^{d-1}\sum_{t=0}^{d-1}\overline{B}_{m_{1}}
\biggl(\frac{as}{d}\biggl)\overline{B}_{m_{2}}\biggl(\frac{bt}{d}\biggl)\overline{B}_{m_{3}}\biggl(\frac{c(s+t)}{d}\biggl)\nonumber\\
&&=\frac{(-1)^{m_{2}-1}m_{2}!m_{3}!(b,c)^{m_{2}+m_{3}}}{b^{m_{3}}c^{m_{2}}(m_{2}+m_{3})!}\nonumber\\
&&\qquad\times\biggl(R\biggl(m_{1},m_{2}+m_{3},q;a,\frac{bc}{(b,c)}\biggl)+R\biggl(m_{2}+m_{3},m_{1},q;\frac{bc}{(b,c)},a\biggl)\biggl)\nonumber\\
&&\quad+A_{m_{1},m_{2},m_{3},q}(a,b,c)+(-1)^{m_{2}+m_{3}}A_{m_{1},m_{3},m_{2},q}(a,c,b)\nonumber\\
&&\quad+B_{m_{1},m_{2},m_{3},q}(a,b,c)+(-1)^{m_{2}+m_{3}}B_{m_{1},m_{3},m_{2},q}(a,c,b)\nonumber\\
&&\quad+C_{m_{1},m_{2},m_{3},q}(a,b,c)+(-1)^{m_{1}}C_{m_{1},m_{3},m_{2},q}(a,c,b)\nonumber\\
&&\quad-D_{m_{1},m_{2},m_{3},q}(b,c)+D_{m_{1},m_{3},m_{2},q}(c,b)+C_{2},
\end{eqnarray}
where $R(m_{1},m_{2},q;a,b)$ is as in \eqref{eq2.2},
\begin{equation*}
A_{m_{1},m_{2},m_{3},q}(a,b,c),B_{m_{1},m_{2},m_{3},q}(a,b,c),C_{m_{1},m_{2},m_{3},q}(a,b,c),D_{m_{1},m_{2},m_{3},q}(b,c),C_{2}
\end{equation*}
are as in \eqref{eq2.15}.
\end{lemma}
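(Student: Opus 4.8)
The plan is to treat the triple product as a nested pair of products and apply Lemma \ref{lem3.1} twice, collapsing the inner sums each time with Raabe's formula \eqref{eq2.4} and finishing with the M\"obius inversion \eqref{eq2.7}.

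First, for a fixed divisor $d\mid q$ I would read $\overline{B}_{m_2}(bt/d)\,\overline{B}_{m_3}(c(s+t)/d)$ as a product in the variable $x=t/d$, with first coefficient $b$ and shift $0$, second coefficient $c$ and shift $cs/d$, so that the controlling gcd is $(b,c)$. Applying Lemma \ref{lem3.1} here (its Kronecker-delta term drops because $m_3\ge2$) splits the product into two main families, carrying inner sums $\sum_{l=1}^{c}$ and $\sum_{l=1}^{b}$ and binomials $\binom{m_2}{j}$, $\binom{m_3}{j}$ respectively, plus the single constant term proportional to $\overline{B}_{m_2+m_3}\!\bigl(-\tfrac{bc}{(b,c)}\tfrac{s}{d}\bigr)$.

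Next I would multiply through by $\overline{B}_{m_1}(as/d)$ and carry out the $t$-summation. Every $t$-dependent factor has the shape $\overline{B}_{m_2+m_3-j}(t/d+\cdots)$, so \eqref{eq2.4} turns it into $d^{\,1-(m_2+m_3-j)}\overline{B}_{m_2+m_3-j}(dl/c)$ or $\overline{B}_{m_2+m_3-j}(dl/b)$, while the constant term (being $t$-free) merely acquires a factor $d$. In each surviving summand the $s$-dependence is then a single linear Bernoulli factor times $\overline{B}_{m_1}(as/d)$, which I would feed into Lemma \ref{lem3.1} a second time in the variable $s/d$. The constant term of the first application, combined with $\overline{B}_{m_1}(as/d)$, is exactly of the type evaluated by Lemma \ref{lem3.2} with parameters $m_1,\,m_2+m_3$ and moduli $a,\,bc/(b,c)$: after the reflection $\overline{B}_{N}(-x)=(-1)^{N}\overline{B}_{N}(x)$ this produces the two $R$-terms displayed in \eqref{eq3.4}, and its leftover Bernoulli-number part assembles into $C_2$ (the apparent sign $(-1)^{m_3}$ coincides with the stated $(-1)^{m_1+m_2}$ precisely when $B_{m_1+m_2+m_3}\ne0$). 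The four possible (first family)$\times$(second family) combinations reproduce $A_{m_1,m_2,m_3,q}(a,b,c)$, $A_{m_1,m_3,m_2,q}(a,c,b)$, $B_{m_1,m_2,m_3,q}(a,b,c)$ and $B_{m_1,m_3,m_2,q}(a,c,b)$; the extra factor $j$ in the $A$-terms comes from the prefactor $nb^{n-1}$ of Lemma \ref{lem3.1} at $n=j$, which also shifts the $j$-sum to start at $1$. The constant terms of the second application, built on the gcd $(a,c)$, give the $C$-pair, and the delta terms, surviving only when $m_1=1$, give $-D_{\cdots}(b,c)+D_{\cdots}(c,b)$.

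Finally I would apply the operator $\sum_{d\mid q}\mu(q/d)\,d^{\,m_1+m_2+m_3-2}$ and invoke \eqref{eq2.7} to turn the divisor sums into the functions $J_\alpha(q)$. A short power count shows the two Raabe collapses leave each main summand with an internal factor $d^{\,j_1}$ and each second-application constant with $d^{\,m_1+j}$, exactly as in the definitions following \eqref{eq2.14}, while in the constant branch the $t$-sum supplies precisely the one extra power of $d$ demanded by the weight $d^{\,m_1+m_2+m_3-1}$ of Lemma \ref{lem3.2}. The main obstacle is the bookkeeping of the second application: one must correctly pair each family-by-family product with the prescribed $A,B$ expression, track how the negative second coefficients (the $-cs/d$ and the shift $-bl/c$) generate $\mathrm{sgn}$ factors that, through the reflection formula, collapse into the signs $(-1)^{m_2+m_3}$ and $(-1)^{m_1}$, reindex the binomial double sums, and handle the $\overline{B}_1$ integer-point corrections so that exactly the $\delta_{\mathbb{Z}}$-structure of the $D$-terms emerges.
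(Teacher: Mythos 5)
Your proposal is correct and follows essentially the same route as the paper: a first application of Lemma \ref{lem3.1} in the variable $t/d$ with shift $cs/d$ (the delta term dying since $m_{3}\geq2$), Raabe collapse \eqref{eq2.4} of the $t$-sum, a second application of Lemma \ref{lem3.1} in $s/d$ for each surviving family, the reflection formula \eqref{eq3.7} to generate the signs $(-1)^{m_{2}+m_{3}}$ and $(-1)^{m_{1}}$, the delta terms yielding the $D$-pair, and \eqref{eq2.7} converting the M\"obius-weighted divisor sums. Your one deviation---evaluating the constant branch by citing Lemma \ref{lem3.2} with parameters $\bigl(m_{1},m_{2}+m_{3};a,\tfrac{bc}{(b,c)}\bigr)$ instead of the paper's third direct application of Lemma \ref{lem3.1} (its equation \eqref{eq3.11})---is equivalent, since Lemma \ref{lem3.2} is itself proved exactly that way, and your observation that the apparent sign $(-1)^{m_{3}}$ agrees with the stated $(-1)^{m_{1}+m_{2}}$ whenever $B_{m_{1}+m_{2}+m_{3}}\neq0$ is precisely the right reconciliation.
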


\begin{proof}
Replacing $m$ by $m_{2}$, $n$ by $m_{3}$, $a$ by $b$, $b$ by $c$, $x$ by $t/d$, $y$ by $0$, $z$ by $cs/d$, and then making the operation $\sum_{t=0}^{d-1}$ on both sides of \eqref{eq3.1}, we discover from \eqref{eq2.4} that for $m_{2},m_{3},b,c,d\in\mathbb{N}$, $s\in\mathbb{Z}$ with $m_{3}\geq2$,
\begin{eqnarray}\label{eq3.5}
&&\sum_{t=0}^{d-1}\overline{B}_{m_{2}}\biggl(\frac{bt}{d}\biggl)\overline{B}_{m_{3}}\biggl(\frac{ct}{d}+\frac{cs}{d}\biggl)\nonumber\\
&&=m_{3}c^{m_{3}-1}\sum_{j=0}^{m_{2}}\binom{m_{2}}{j}\frac{(-1)^{j}b^{m_{2}-j}}{d^{m_{2}+m_{3}-j-1}(m_{2}+m_{3}-j)}
\sum_{l=1}^{c}\overline{B}_{j}\biggl(\frac{bl}{c}+\frac{bs}{d}\biggl)\nonumber\\
&&\qquad\times\overline{B}_{m_{2}+m_{3}-j}\biggl(\frac{dl}{c}\biggl)\nonumber\\
&&\quad+m_{2}b^{m_{2}-1}\sum_{j=0}^{m_{3}}\binom{m_{3}}{j}\frac{(-1)^{j}c^{m_{3}-j}}{d^{m_{2}+m_{3}-j-1}(m_{2}+m_{3}-j)}
\sum_{l=1}^{b}\overline{B}_{j}\biggl(\frac{cl}{b}-\frac{cs}{d}\biggl)\nonumber\\
&&\qquad\times\overline{B}_{m_{2}+m_{3}-j}\biggl(\frac{dl}{b}\biggl)\nonumber\\
&&\quad+\frac{(-1)^{m_{3}-1}dm_{2}!m_{3}!(b,c)^{m_{2}+m_{3}}\overline{B}_{m_{2}+m_{3}}\bigl(-\frac{\frac{bc}{(b,c)}s}{d}\bigl)}{b^{m_{3}}c^{m_{2}}(m_{2}+m_{3})!}.
\end{eqnarray}
It follows from \eqref{eq2.5}, \eqref{eq2.11} and \eqref{eq3.5} that for $m_{1},m_{2},m_{3},a,b,c,d\in\mathbb{N}$, $s\in\mathbb{Z}$ with $m_{3}\geq2$ and $(a,d)=(b,d)=(c,d)=1$,
\begin{eqnarray*}
&&\sum_{s=0}^{d-1}\sum_{t=0}^{d-1}\overline{B}_{m_{1}}\biggl(\frac{as}{d}\biggl)\overline{B}_{m_{2}}
\biggl(\frac{bt}{d}\biggl)\overline{B}_{m_{3}}\biggl(\frac{c(s+t)}{d}\biggl)\\
&&=m_{3}c^{m_{3}-1}\sum_{j=1}^{m_{2}}\binom{m_{2}}{j}\frac{(-1)^{j}b^{m_{2}-j}}{d^{m_{2}+m_{3}-j-1}(m_{2}+m_{3}-j)}
\sum_{l=1}^{c}\overline{B}_{m_{2}+m_{3}-j}\biggl(\frac{dl}{c}\biggl)\\
&&\qquad\times\sum_{s=0}^{d-1}\overline{B}_{m_{1}}\biggl(\frac{as}{d}\biggl)\overline{B}_{j}\biggl(\frac{bs}{d}+\frac{bl}{c}\biggl)\nonumber\\
&&\quad+m_{2}b^{m_{2}-1}\sum_{j=1}^{m_{3}}\binom{m_{3}}{j}\frac{c^{m_{3}-j}}{d^{m_{2}+m_{3}-j-1}(m_{2}+m_{3}-j)}
\sum_{l=1}^{b}\overline{B}_{m_{2}+m_{3}-j}\biggl(\frac{dl}{b}\biggl)\\
&&\qquad\times\sum_{s=0}^{d-1}\overline{B}_{m_{1}}\biggl(\frac{as}{d}\biggl)\overline{B}_{j}\biggl(\frac{cs}{d}-\frac{cl}{b}\biggl)\nonumber\\
&&\quad+\frac{(-1)^{m_{2}-1}dm_{2}!m_{3}!(b,c)^{m_{2}+m_{3}}}{b^{m_{3}}c^{m_{2}}(m_{2}+m_{3})!}\sum_{s=0}^{d-1}\overline{B}_{m_{1}}
\biggl(\frac{as}{d}\biggl)\overline{B}_{m_{2}+m_{3}}\biggl(\frac{\frac{bc}{(b,c)}s}{d}\biggl)\\
&&\quad+\frac{m_{3}b^{m_{2}}\overline{B}_{m_{1}}(0)B_{m_{2}+m_{3}}}{c^{m_{2}}d^{m_{1}+m_{2}+m_{3}-2}(m_{2}+m_{3})}+\frac{m_{2}c^{m_{3}}
\overline{B}_{m_{1}}(0)B_{m_{2}+m_{3}}}{b^{m_{3}}d^{m_{1}+m_{2}+m_{3}-2}(m_{2}+m_{3})},
\end{eqnarray*}
which together with \eqref{eq2.6} means that for $q,m_{1},m_{2},m_{3},a,b,c\in\mathbb{N}$ with $q,m_{3}\geq2$ and $(a,q)=(b,q)=(c,q)=1$,
\begin{eqnarray}\label{eq3.6}
&&\sum_{d\mid q}\mu\biggl(\frac{q}{d}\biggl)d^{m_{1}+m_{2}+m_{3}-2}\sum_{s=0}^{d-1}\sum_{t=0}^{d-1}\overline{B}_{m_{1}}\biggl(\frac{as}{d}\biggl)\overline{B}_{m_{2}}
\biggl(\frac{bt}{d}\biggl)\overline{B}_{m_{3}}\biggl(\frac{c(s+t)}{d}\biggl)\nonumber\\
&&=m_{3}c^{m_{3}-1}\sum_{j=1}^{m_{2}}\binom{m_{2}}{j}\frac{(-1)^{j}b^{m_{2}-j}}{m_{2}+m_{3}-j}\sum_{d\mid q}\mu\biggl(\frac{q}{d}\biggl)d^{m_{1}+j-1}
\sum_{l=1}^{c}\overline{B}_{m_{2}+m_{3}-j}\biggl(\frac{dl}{c}\biggl)\nonumber\\
&&\qquad\times\sum_{s=0}^{d-1}\overline{B}_{m_{1}}\biggl(\frac{as}{d}\biggl)\overline{B}_{j}\biggl(\frac{bs}{d}+\frac{bl}{c}\biggl)\nonumber\\
&&\quad+m_{2}b^{m_{2}-1}\sum_{j=1}^{m_{3}}\binom{m_{3}}{j}\frac{c^{m_{3}-j}}{m_{2}+m_{3}-j}\sum_{d\mid q}\mu\biggl(\frac{q}{d}\biggl)d^{m_{1}+j-1}
\sum_{l=1}^{b}\overline{B}_{m_{2}+m_{3}-j}\biggl(\frac{dl}{b}\biggl)\nonumber\\
&&\qquad\times\sum_{s=0}^{d-1}\overline{B}_{m_{1}}\biggl(\frac{as}{d}\biggl)\overline{B}_{j}\biggl(\frac{cs}{d}-\frac{cl}{b}\biggl)\nonumber\\
&&\quad+\frac{(-1)^{m_{2}-1}m_{2}!m_{3}!(b,c)^{m_{2}+m_{3}}}{b^{m_{3}}c^{m_{2}}(m_{2}+m_{3})!}\sum_{d\mid q}\mu\biggl(\frac{q}{d}\biggl)d^{m_{1}+m_{2}+m_{3}-1}\nonumber\\
&&\qquad\times\sum_{s=0}^{d-1}\overline{B}_{m_{1}}
\biggl(\frac{as}{d}\biggl)\overline{B}_{m_{2}+m_{3}}\biggl(\frac{\frac{bc}{(b,c)}s}{d}\biggl).
\end{eqnarray}
It is easy to see from Lemma \ref{lem3.1}, \eqref{eq2.4} and \eqref{eq2.11} that for $m_{1},j,a,b,c,d\in\mathbb{N}$, $l\in \mathbb{Z}$ with $(a,d)=1$,
\begin{eqnarray}\label{eq3.7}
&&\sum_{s=0}^{d-1}\overline{B}_{m_{1}}\biggl(\frac{as}{d}\biggl)\overline{B}_{j}\biggl(\frac{bs}{d}+\frac{bl}{c}\biggl)\nonumber\\
&&=jb^{j-1}\sum_{j_{1}=0}^{m_{1}}\binom{m_{1}}{j_{1}}\frac{(-1)^{j_{1}}a^{m_{1}-j_{1}}}{d^{m_{1}+j-j_{1}-1}(m_{1}+j-j_{1})}
\sum_{l_{1}=1}^{b}\overline{B}_{j_{1}}\biggl(\frac{al_{1}}{b}+\frac{al}{c}\biggl)\nonumber\\
&&\qquad\times\overline{B}_{m_{1}+j-j_{1}}\biggl(\frac{dl_{1}}{b}+\frac{dl}{c}\biggl)\nonumber\\
&&\quad+m_{1}a^{m_{1}-1}\sum_{j_{1}=0}^{j}\binom{j}{j_{1}}\frac{(-1)^{j_{1}}b^{j-j_{1}}}{d^{m_{1}+j-j_{1}-1}(m_{1}+j-j_{1})}
\sum_{l_{1}=1}^{a}\overline{B}_{j_{1}}\biggl(\frac{bl_{1}}{a}-\frac{bl}{c}\biggl)\nonumber\\
&&\qquad\times\overline{B}_{m_{1}+j-j_{1}}\biggl(\frac{dl_{1}}{a}\biggl)\nonumber\\
&&\quad+\frac{(-1)^{m_{1}-1}dm_{1}!j!(a,b)^{m_{1}+j}\overline{B}_{m_{1}+j}\bigl(\frac{\frac{ab}{(a,b)}l}{c}\bigl)}{a^{j}b^{m_{1}}(m_{1}+j)!}-\frac{\delta_{1,m_{1}}\delta_{1,j}}{4}\delta_{\mathbb{Z}}\biggl(\frac{bl}{c}\biggl).
\end{eqnarray}
By substituting $c$ for $b$, $b$ for $c$, $-l$ for $l$ in \eqref{eq3.7}, we get from \eqref{eq2.11} that
\begin{eqnarray}\label{eq3.8}
&&\sum_{s=0}^{d-1}\overline{B}_{m_{1}}\biggl(\frac{as}{d}\biggl)\overline{B}_{j}\biggl(\frac{cs}{d}-\frac{cl}{b}\biggl)\nonumber\\
&&=jc^{j-1}\sum_{j_{1}=0}^{m_{1}}\binom{m_{1}}{j_{1}}\frac{(-1)^{j_{1}}a^{m_{1}-j_{1}}}{d^{m_{1}+j-j_{1}-1}(m_{1}+j-j_{1})}
\sum_{l_{1}=1}^{c}\overline{B}_{j_{1}}\biggl(\frac{al_{1}}{c}-\frac{al}{b}\biggl)\nonumber\\
&&\qquad\times\overline{B}_{m_{1}+j-j_{1}}\biggl(\frac{dl_{1}}{c}-\frac{dl}{b}\biggl)\nonumber\\
&&\quad+m_{1}a^{m_{1}-1}\sum_{j_{1}=0}^{j}\binom{j}{j_{1}}\frac{(-1)^{j_{1}}c^{j-j_{1}}}{d^{m_{1}+j-j_{1}-1}(m_{1}+j-j_{1})}
\sum_{l_{1}=1}^{a}\overline{B}_{j_{1}}\biggl(\frac{cl_{1}}{a}+\frac{cl}{b}\biggl)\nonumber\\
&&\qquad\times\overline{B}_{m_{1}+j-j_{1}}\biggl(\frac{dl_{1}}{a}\biggl)\nonumber\\
&&\quad+\frac{(-1)^{j-1}dm_{1}!j!(a,c)^{m_{1}+j}\overline{B}_{m_{1}+j}\bigl(\frac{\frac{ac}{(a,c)}l}{b}\bigl)}{a^{j}c^{m_{1}}(m_{1}+j)!}-\frac{\delta_{1,m_{1}}\delta_{1,j}}{4}\delta_{\mathbb{Z}}\biggl(\frac{cl}{b}\biggl).
\end{eqnarray}
Taking $l=0$ and then replacing $j$ by $m_{2}+m_{3}$, $b$ by $bc/(b,c)$ in \eqref{eq3.7}, we have
\begin{eqnarray}\label{eq3.9}
&&\sum_{s=0}^{d-1}\overline{B}_{m_{1}}\biggl(\frac{as}{d}\biggl)\overline{B}_{m_{2}+m_{3}}\biggl(\frac{\frac{bc}{(b,c)}s}{d}\biggl)\nonumber\\
&&=(m_{2}+m_{3})\biggl(\frac{bc}{(b,c)}\biggl)^{m_{2}+m_{3}-1}\sum_{j_{1}=0}^{m_{1}}\binom{m_{1}}{j_{1}}
\frac{(-1)^{j_{1}}a^{m_{1}-j_{1}}}{d^{m_{1}+m_{2}+m_{3}-j_{1}-1}}\nonumber\\
&&\qquad\times\frac{1}{m_{1}+m_{2}+m_{3}-j_{1}}\sum_{l_{1}=1}^{\frac{bc}{(b,c)}}\overline{B}_{j_{1}}\biggl(\frac{al_{1}}{\frac{bc}{(b,c)}}\biggl)
\overline{B}_{m_{1}+m_{2}+m_{3}-j_{1}}\biggl(\frac{dl_{1}}{\frac{bc}{(b,c)}}\biggl)\nonumber\\
&&\quad+m_{1}a^{m_{1}-1}\sum_{j_{1}=0}^{m_{2}+m_{3}}\binom{m_{2}+m_{3}}{j_{1}}
\frac{(-1)^{j_{1}}\bigl(\frac{bc}{(b,c)}\bigl)^{m_{2}+m_{3}-j_{1}}}{d^{m_{1}+m_{2}+m_{3}-j_{1}-1}(m_{1}+m_{2}+m_{3}-j_{1})}\nonumber\\
&&\qquad\times\sum_{l_{1}=1}^{a}\overline{B}_{j_{1}}\biggl(\frac{\frac{bc}{(b,c)}l_{1}}{a}\biggl)\overline{B}_{m_{1}+m_{2}+m_{3}-j_{1}}\biggl(\frac{dl_{1}}{a}\biggl)\nonumber\\
&&\quad+\frac{(-1)^{m_{1}-1}dm_{1}!(m_{2}+m_{3})!\bigl(a,\frac{bc}{(b,c)}\bigl)^{m_{1}+m_{2}+m_{3}}B_{m_{1}+m_{2}+m_{3}}}{a^{m_{2}+m_{3}}\bigl(\frac{bc}{(b,c)}\bigl)^{m_{1}}(m_{1}+m_{2}+m_{3})!}.
\end{eqnarray}
Thus, inserting \eqref{eq3.7}, \eqref{eq3.8} and \eqref{eq3.9} into \eqref{eq3.6}, in view of \eqref{eq2.11} and the property of residue
systems used in \eqref{eq2.5}, we prove Lemma \ref{lem3.3}.
\end{proof}

\section{Proofs of Theorems \ref{thm2.1} and \ref{thm2.4}}

\noindent{\em{The proof of Theorem \ref{thm2.1}.}} It is easily seen that for $m\in\mathbb{N}$,
\begin{eqnarray}\label{eq4.1}
\sideset{}{'}\sum_{l=-\infty}^{+\infty}\frac{\chi(l)}{l^{m}}&=&\sum_{l=1}^{\infty}\frac{\chi(l)}{l^{m}}+\sum_{l=1}^{\infty}\frac{\chi(-l)}{(-l)^{m}}\nonumber\\
&=&\bigl(1+(-1)^{m}\chi(-1)\bigl)L(m,\chi),
\end{eqnarray}
where the dash denotes throughout that undefined terms are excluded from the sum. We know from \eqref{eq4.1} that for $m,n\in\mathbb{N}$,
\begin{eqnarray}\label{eq4.2}
&&\biggl(\sideset{}{'}\sum_{l=-\infty}^{+\infty}\frac{\chi(l)}{l^{m}}\biggl)\biggl(\sideset{}{'}\sum_{j=-\infty}^{+\infty}\frac{\overline{\chi}(j)}{j^{n}}\biggl)\nonumber\\
&&=\bigl(1+(-1)^{m}\chi(-1)\bigl)\bigl(1+(-1)^{n}\overline{\chi}(-1)\bigl)L(m,\chi)L(n,\overline{\chi}).
\end{eqnarray}
It follows from \eqref{eq4.2} that for $q,m,n\in\mathbb{N}$, $a,b\in\mathbb{Z}$ with $(a,q)=(b,q)=1$,
\begin{eqnarray}\label{eq4.3}
V_{\chi,q}(m,n;a,b)&=&\frac{1}{4}\sum_{\text{$\chi$ (mod $q$)}}\chi(a)\overline{\chi}(b)\biggl(\sideset{}{'}\sum_{l=-\infty}^{+\infty}\frac{\chi(l)}{l^{m}}\biggl)
\biggl(\sideset{}{'}\sum_{j=-\infty}^{+\infty}\frac{\overline{\chi}(j)}{j^{n}}\biggl)\nonumber\\
&=&\frac{1}{4}\sideset{}{'}\sum_{l=-\infty}^{+\infty}\sideset{}{'}\sum_{j=-\infty}^{+\infty}\frac{1}{l^{m}j^{n}}\sum_{\text{$\chi$ (mod $q$)}}\chi(al)\overline{\chi}(bj)\nonumber\\
&=&\frac{1}{4}\underset{(j,q)=1}{\sideset{}{'}\sum_{l=-\infty}^{+\infty}\sideset{}{'}\sum_{j=-\infty}^{+\infty}}\frac{1}{l^{m}j^{n}}\sum_{\text{$\chi$ (mod $q$)}}\chi(al)\overline{\chi}(bj).
\end{eqnarray}
We now apply the orthogonality relation for the Dirichlet characters modulo $q$ stated in \cite[Theorem 6.16]{apostol} to rewrite \eqref{eq4.3} as
\begin{equation}\label{eq4.4}
V_{\chi,q}(m,n;a,b)
=\frac{\phi(q)}{4}\underset{\substack{(j,q)=1\\\text{$al\equiv bj$ (mod $q$)}}}{\sideset{}{'}\sum_{l=-\infty}^{+\infty}\sideset{}{'}\sum_{j=-\infty}^{+\infty}}\frac{1}{l^{m}j^{n}}.
\end{equation}
Note that from \eqref{eq2.6} and the familiar geometric sum shown in \cite[Theorem 8.1]{apostol} we have
\begin{eqnarray}\label{eq4.5}
\underset{\substack{(j,q)=1\\\text{$al\equiv bj$ (mod $q$)}}}{\sideset{}{'}\sum_{l=-\infty}^{+\infty}\sideset{}{'}\sum_{j=-\infty}^{+\infty}}\frac{1} {l^{m}j^{n}}
&=&\frac{1}{q}\underset{(j,q)=1}{\sideset{}{'}\sum_{l=-\infty}^{+\infty}\sideset{}{'}\sum_{j=-\infty}^{+\infty}}\frac{1}{l^{m}j^{n}}
\sum_{r=0}^{q-1}e^{\frac{2\pi\mathrm{i}(al-bj)r}{q}}\nonumber\\
&=&\frac{1}{q}\sum_{d\mid q}\mu(d)\underset{d\mid j}{\sideset{}{'}\sum_{l=-\infty}^{+\infty}\sideset{}{'}\sum_{j=-\infty}^{+\infty}}\frac{1}{l^{m}j^{n}}\sum_{r=0}^{q-1}e^{\frac{2\pi\mathrm{i}(al-bj)r}{q}}\nonumber\\
&=&\frac{1}{q}\sum_{d\mid q}\frac{\mu(d)}{d^{n}}\sideset{}{'}\sum_{l=-\infty}^{+\infty}\sideset{}{'}\sum_{j=-\infty}^{+\infty}\frac{1}{l^{m}j^{n}}
\sum_{r=0}^{q-1}e^{\frac{2\pi\mathrm{i}(al-bdj)r}{q}}\nonumber\\
&=&\frac{1}{q}\sum_{d\mid q}\frac{\mu(d)}{d^{n}}\sum_{r=0}^{q-1}{\sideset{}{'}\sum_{l=-\infty}^{+\infty}\frac{e^{\frac{2\pi\mathrm{i}arl}{q}}}{l^{m}}
\sideset{}{'}\sum_{j=-\infty}^{+\infty}}\frac{e^{-\frac{2\pi\mathrm{i}bdrj}{q}}}{j^{n}}\nonumber\\
&=&\frac{(-1)^{n}}{q}\sum_{d\mid q}\frac{\mu(d)}{d^{n}}\sum_{r=0}^{q-1}{\sideset{}{'}\sum_{l=-\infty}^{+\infty}\frac{e^{\frac{2\pi\mathrm{i}arl}{q}}}{l^{m}}
\sideset{}{'}\sum_{j=-\infty}^{+\infty}}\frac{e^{\frac{2\pi\mathrm{i}bdrj}{q}}}{j^{n}}.
\end{eqnarray}
Since for $n\in\mathbb{N}$, $x\in\mathbb{R}$, the Bernoulli function $\overline{B}_{n}(x)$ defined in \eqref{eq2.1} can be given by the Fourier series (see, e.g., \cite[Theorem 12.19]{apostol} or \cite[Equation (2)]{hall}),
\begin{equation}\label{eq4.6}
\overline{B}_{n}(x)=-\frac{n!}{(2\pi\mathrm{i})^{n}}\sideset{}{'}\sum_{k=-\infty}^{+\infty}\frac{e^{2\pi\mathrm{i}kx}}{k^{n}},
\end{equation}
by \eqref{eq4.5} and \eqref{eq4.6} we have
\begin{equation}\label{eq4.7}
\underset{\substack{(j,q)=1\\\text{$al\equiv bj$ (mod $q$)}}}{\sideset{}{'}\sum_{l=-\infty}^{+\infty}\sideset{}{'}\sum_{j=-\infty}^{+\infty}}\frac{1} {l^{m}j^{n}}
=\frac{(-1)^{n}(2\pi \mathrm{i})^{m+n}}{qm!n!}\sum_{d\mid q}\frac{\mu(d)}{d^{n}}\sum_{r=0}^{q-1}\overline{B}_{m}\biggl(\frac{ar}{q}\biggl)\overline{B}_{n}\biggl(\frac{bdr}{q}\biggl).
\end{equation}
Inserting \eqref{eq4.7} into \eqref{eq4.4}, we obtain that for $q,m,n\in\mathbb{N}$, $a,b\in\mathbb{Z}$ with $(a,q)=(b,q)=1$,
\begin{equation}\label{eq4.8}
V_{\chi,q}(m,n;a,b)=\frac{(-1)^{\frac{m-n}{2}}(2\pi)^{m+n}\phi(q)}{4q m!n!}\sum_{d\mid q}\frac{\mu(d)}{d^{n}}\sum_{r=0}^{q-1}\overline{B}_{m}\biggl(\frac{ar}{q}\biggl)\overline{B}_{n}\biggl(\frac{bdr}{q}\biggl).
\end{equation}
With the help of the division algorithm described in \cite[Theorem 1.14]{apostol}, we discover from \eqref{eq2.5} that the sum on the right hand side of \eqref{eq4.8} can be rewritten as
\begin{eqnarray}\label{eq4.9}
\sum_{r=0}^{q-1}\overline{B}_{m}\biggl(\frac{ar}{q}\biggl)\overline{B}_{n}\biggl(\frac{bdr}{q}\biggl)
&=&\sum_{l=0}^{d-1}\sum_{j=0}^{\frac{q}{d}-1}\overline{B}_{m}\biggl(\frac{a(\frac{q}{d}l+j)}{q}\biggl)\overline{B}_{n}\biggl(\frac{bd(\frac{q}{d}l+j)}{q}\biggl)\nonumber\\
&=&\sum_{l=0}^{d-1}\sum_{j=0}^{\frac{q}{d}-1}\overline{B}_{m}\biggl(\frac{a(\frac{q}{d}l+j)}{q}\biggl)\overline{B}_{n}\biggl(\frac{bj}{\frac{q}{d}}\biggl)\nonumber\\
&=&\sum_{l=0}^{d-1}\sum_{j=0}^{\frac{q}{d}-1}\overline{B}_{m}\biggl(\frac{al}{d}+\frac{aj}{q}\biggl)\overline{B}_{n}\biggl(\frac{bj}{\frac{q}{d}}\biggl)\nonumber\\
&=&\frac{1}{d^{m-1}}\sum_{j=0}^{\frac{q}{d}-1}\overline{B}_{m}\biggl(\frac{aj}{\frac{q}{d}}\biggl)\overline{B}_{n}\biggl(\frac{bj}{\frac{q}{d}}\biggl).
\end{eqnarray}
By combining \eqref{eq4.8} and \eqref{eq4.9}, we conclude that for $q,m,n\in\mathbb{N}$, $a,b\in\mathbb{Z}$ with $(a,q)=(b,q)=1$,
\begin{eqnarray}\label{eq4.10}
V_{\chi,q}(m,n;a,b)&=&\frac{(-1)^{\frac{m-n}{2}}(2\pi)^{m+n}\phi(q)}{4q m!n!}\sum_{d\mid q}\frac{\mu(d)}{d^{m+n-1}}\sum_{j=0}^{\frac{q}{d}-1}\overline{B}_{m}\biggl(\frac{aj}{\frac{q}{d}}\biggl)\overline{B}_{n}\biggl(\frac{bj}{\frac{q}{d}}\biggl)\nonumber\\
&=&\frac{(-1)^{\frac{m-n}{2}}(2\pi)^{m+n}\phi(q)}{4q^{m+n} m!n!}\sum_{d\mid q}\mu\biggl(\frac{q}{d}\biggl)d^{m+n-1}\nonumber\\
&&\times\sum_{j=0}^{d-1}\overline{B}_{m}\biggl(\frac{aj}{d}\biggl)\overline{B}_{n}\biggl(\frac{bj}{d}\biggl).
\end{eqnarray}
Now we get \eqref{eq2.2} immediately when applying Lemma \ref{lem3.2} to the right hand side of \eqref{eq4.10}. This completes the proof of Theorem \ref{thm2.1}.

\noindent{\em{The proof of Theorem \ref{thm2.4}.}} We know from \eqref{eq4.1} that for $m_{1},m_{2},m_{3}\in\mathbb{N}$,
\begin{eqnarray}\label{eq4.11}
&&\biggl(\sideset{}{'}\sum_{l=-\infty}^{+\infty}\frac{\chi_{1}(l)}{l^{m_{1}}}\biggl)\biggl(\sideset{}{'}\sum_{j=-\infty}^{+\infty}\frac{\chi_{2}(j)}{j^{m_{2}}}\biggl)
\biggl(\sideset{}{'}\sum_{k=-\infty}^{+\infty}\frac{\overline{\chi_{1}\chi_{2}}(k)}{k^{m_{3}}}\biggl)\nonumber\\
&&=\bigl(1+(-1)^{m_{1}}\chi_{1}(-1)\bigl)\bigl(1+(-1)^{m_{2}}\chi_{2}(-1)\bigl)\bigl(1+(-1)^{m_{3}}\overline{\chi_{1}\chi_{2}}(-1)\bigl)\nonumber\\
&&\quad\times L(m_{1},\chi_{1})L(m_{2},\chi_{2})L(m_{3},\overline{\chi_{1}\chi_{2}}).
\end{eqnarray}
It follows from \eqref{eq4.11} that for $m_{1},m_{2},m_{3}\in\mathbb{N}$, $a,b,c\in\mathbb{Z}$ with $m_{1}+m_{2}\equiv m_{3}$ (mod $2$) and $(a,q)=(b,q)=(c,q)=1$,
\begin{eqnarray}\label{eq4.12}
&&V_{\chi_{1},\chi_{2},q}(m_{1},m_{2},m_{3};a,b,c)\nonumber\\
&&=\frac{1}{8}\sum_{\text{$\chi_{1},\chi_{2}$ (mod $q$)}}\biggl(\sideset{}{'}\sum_{l=-\infty}^{+\infty}\frac{\chi_{1}(al)}{l^{m_{1}}}\biggl)
\biggl(\sideset{}{'}\sum_{j=-\infty}^{+\infty}\frac{\chi_{2}(bj)}{j^{m_{2}}}\biggl)
\biggl(\sideset{}{'}\sum_{k=-\infty}^{+\infty}\frac{\overline{\chi_{1}\chi_{2}}(ck)}{k^{m_{3}}}\biggl)\nonumber\\
&&=\frac{1}{8}\underset{(k,q)=1}{\sideset{}{'}\sum_{l=-\infty}^{+\infty}\sideset{}{'}\sum_{j=-\infty}^{+\infty}
\sideset{}{'}\sum_{k=-\infty}^{+\infty}}\frac{1}{l^{m_{1}}j^{m_{2}}k^{m_{3}}}\nonumber\\
&&\quad\times\sum_{\text{$\chi_{1}$ (mod $q$)}}\chi_{1}(al)\overline{\chi_{1}}(ck)\sum_{\text{$\chi_{2}$ (mod $q$)}}\chi_{2}(bj)\overline{\chi_{2}}(ck).
\end{eqnarray}
Using the orthogonality relation for the Dirichlet characters modulo $q$, we rewrite \eqref{eq4.12} as
\begin{equation}\label{eq4.13}
V_{\chi_{1},\chi_{2},q}(m_{1},m_{2},m_{3};a,b,c)=\frac{\phi(q)^{2}}{8}\underset{\substack{(k,q)=1\\ \text{$al\equiv ck$ (mod $q$)}\\ \text{$bj\equiv ck$ (mod $q$)}}}{\sideset{}{'}\sum_{l=-\infty}^{+\infty}\sideset{}{'}\sum_{j=-\infty}^{+\infty}\sideset{}{'}\sum_{k=-\infty}^{+\infty}}\frac{1}{l^{m_{1}}j^{m_{2}}k^{m_{3}}}.
\end{equation}
In a similar consideration to \eqref{eq4.7}, we have
\begin{eqnarray*}
&&\underset{\substack{(k,q)=1\\ \text{$al\equiv ck$ (mod $q$)}\\ \text{$bj\equiv ck$ (mod $q$)}}}{\sideset{}{'}\sum_{l=-\infty}^{+\infty}\sideset{}{'}\sum_{j=-\infty}^{+\infty}\sideset{}{'}\sum_{k=-\infty}^{+\infty}}\frac{1}{l^{m_{1}}j^{m_{2}}k^{m_{3}}}\\
&&=\frac{1}{q^{2}}\underset{(k,q)=1}{\sideset{}{'}\sum_{l=-\infty}^{+\infty}\sideset{}{'}\sum_{j=-\infty}^{+\infty}
\sideset{}{'}\sum_{k=-\infty}^{+\infty}}\frac{1}{l^{m_{1}}j^{m_{2}}k^{m_{3}}}\sum_{r=0}^{q-1}e^{\frac{2\pi\mathrm{i}(al-ck)r}{q}}
\sum_{s=0}^{q-1}e^{\frac{2\pi\mathrm{i}(bj-ck)s}{q}}\\
&&=\frac{1}{q^{2}}\sum_{d\mid q}\mu(d)\underset{d\mid k}{\sideset{}{'}\sum_{l=-\infty}^{+\infty}\sideset{}{'}\sum_{j=-\infty}^{+\infty}\sideset{}{'}\sum_{k=-\infty}^{+\infty}}\frac{1}{l^{m_{1}}j^{m_{2}}k^{m_{3}}}
\sum_{r=0}^{q-1}e^{\frac{2\pi\mathrm{i}(al-ck)r}{q}}\sum_{s=0}^{q-1}e^{\frac{2\pi\mathrm{i}(bj-ck)s}{q}}\\
&&=-\frac{(-1)^{m_{3}}(2\pi \mathrm{i})^{m_{1}+m_{2}+m_{3}}}{q^{2}m_{1}! m_{2}!m_{3}!}\sum_{d\mid q}\frac{\mu(d)}{d^{m_{3}}}\sum_{r=0}^{q-1}\sum_{s=0}^{q-1}\overline{B}_{m_{1}}\biggl(\frac{ar}{q}\biggl)\overline{B}_{m_{2}}\biggl(\frac{bs}{q}\biggl)\nonumber\\
&&\quad\times\overline{B}_{m_{3}}\biggl(\frac{cd(r+s)}{q}\biggl),
\end{eqnarray*}
from which and \eqref{eq4.13} we conclude that for $m_{1},m_{2},m_{3}\in\mathbb{N}$, $a,b,c\in\mathbb{Z}$ with $m_{1}+m_{2}\equiv m_{3}$ (mod $2$) and $(a,q)=(b,q)=(c,q)=1$,
\begin{eqnarray}\label{eq4.14}
&&V_{\chi_{1},\chi_{2},q}(m_{1},m_{2},m_{3};a,b,c)\nonumber\\
&&=-\frac{(-1)^{\frac{m_{1}+m_{2}-m_{3}}{2}}(2\pi)^{m_{1}+m_{2}+m_{3}}\phi(q)^{2}}{8q^{2}m_{1}!m_{2}!m_{3}!}\sum_{d\mid q}\frac{\mu(d)}{d^{m_{3}}}\nonumber\\
&&\quad\times\sum_{r=0}^{q-1}\sum_{s=0}^{q-1}\overline{B}_{m_{1}}\biggl(\frac{ar}{q}\biggl)\overline{B}_{m_{2}}\biggl(\frac{bs}{q}\biggl)
\overline{B}_{m_{3}}\biggl(\frac{cd(r+s)}{q}\biggl).
\end{eqnarray}
Note that from \eqref{eq2.5} and the division algorithm, we can rewrite the sum on the right hand side of \eqref{eq4.14} as
\begin{eqnarray}\label{eq4.15}
&&\sum_{r=0}^{q-1}\sum_{s=0}^{q-1}\overline{B}_{m_{1}}\biggl(\frac{ar}{q}\biggl)\overline{B}_{m_{2}}\biggl(\frac{bs}{q}\biggl)
\overline{B}_{m_{3}}\biggl(\frac{cd(r+s)}{q}\biggl)\nonumber\\
&&=\sum_{l=0}^{d-1}\sum_{j=0}^{\frac{q}{d}-1}\sum_{k=0}^{d-1}\sum_{t=0}^{\frac{q}{d}-1}\overline{B}_{m_{1}}
\biggl(\frac{a(\frac{q}{d}l+j)}{q}\biggl)\overline{B}_{m_{2}}\biggl(\frac{b(\frac{q}{d}k+t)}{q}\biggl)\nonumber\\
&&\quad\times\overline{B}_{m_{3}}\biggl(\frac{cd\bigl(\frac{q}{d}(l+k)+j+t\bigl)}{q}\biggl)\nonumber\\
&&=\sum_{l=0}^{d-1}\sum_{j=0}^{\frac{q}{d}-1}\sum_{k=0}^{d-1}\sum_{t=0}^{\frac{q}{d}-1}
\overline{B}_{m_{1}}\biggl(\frac{al}{d}+\frac{aj}{q}\biggl)\overline{B}_{m_{2}}\biggl(\frac{bk}{d}+\frac{bt}{q}\biggl)
\overline{B}_{m_{3}}\biggl(\frac{c(j+t)}{\frac{q}{d}}\biggl)\nonumber\\
&&=\frac{1}{d^{m_{1}+m_{2}-2}}\sum_{j=0}^{\frac{q}{d}-1}\sum_{t=0}^{\frac{q}{d}-1}\overline{B}_{m_{1}}\biggl(\frac{aj}{\frac{q}{d}}\biggl)
\overline{B}_{m_{2}}\biggl(\frac{bt}{\frac{q}{d}}\biggl)\overline{B}_{m_{3}}\biggl(\frac{c(j+t)}{\frac{q}{d}}\biggl).
\end{eqnarray}
Inserting \eqref{eq4.15} into \eqref{eq4.14}, and it then follows that for $m_{1},m_{2},m_{3}\in\mathbb{N}$, $a,b,c\in\mathbb{Z}$ with $m_{1}+m_{2}\equiv m_{3}$ (mod $2$) and $(a,q)=(b,q)=(c,q)=1$,
\begin{eqnarray}\label{eq4.16}
&&V_{\chi_{1},\chi_{2},q}(m_{1},m_{2},m_{3};a,b,c)\nonumber\\
&&=-\frac{(-1)^{\frac{m_{1}+m_{2}-m_{3}}{2}}(2\pi)^{m_{1}+m_{2}+m_{3}}\phi(q)^{2}}{8q^{m_{1}+m_{2}+m_{3}}m_{1}!m_{2}!m_{3}!}\sum_{d\mid q}\mu\biggl(\frac{q}{d}\biggl)d^{m_{1}+m_{2}+m_{3}-2}\nonumber\\
&&\quad\times\sum_{s=0}^{d-1}\sum_{t=0}^{d-1}\overline{B}_{m_{1}}\biggl(\frac{as}{d}\biggl)\overline{B}_{m_{2}}\biggl(\frac{bt}{d}\biggl)
\overline{B}_{m_{3}}\biggl(\frac{c(s+t)}{d}\biggl).
\end{eqnarray}
Therefore, by applying Lemma \ref{lem3.3} to the right hand side of \eqref{eq4.16}, we get \eqref{eq2.15} and finish the proof of Theorem \ref{thm2.4}.

\end{document}